\renewcommand{\k}{\mathbb{k}}
\theoremstyle{definition}
\newtheorem{theorem}{Theorem}[section]
\newtheorem{theoremx}{Theorem}
\numberwithin{equation}{section}
\newtheorem{corollary}[theorem]{Corollary}
\newtheorem{lemma}[theorem]{Lemma}
\newtheorem{proposition}[theorem]{Proposition}
\newtheorem{notation}[theorem]{Notation}
\theoremstyle{definition}
\newtheorem{definition}[theorem]{Definition}
\newtheorem{example}[theorem]{Example}
\newtheorem{conjecturex}[theoremx]{Conjecture}
\newtheorem{remark}[theorem]{Remark}
\newtheoremstyle{TheoremNum}
        {8pt}{8pt}              
        {\upshape}                      
        {}                              
        {\bfseries}                     
        {.}                             
        {.5em}                             
        {\theoremname{#1}\theoremnote{ \bfseries #3}}
  \theoremstyle{TheoremNum}
\newcommand{\m}{\mathfrak{m}}
\newcommand{\n}{\mathfrak{n}}
\renewcommand{\(}{\left(}
\renewcommand{\)}{\right)}
\newcommand{\cC}{\mathfrak{C}}
\newcommand{\ord}{\operatorname{ord}}
\newcommand{\edim}{\operatorname{edim}}
\newcommand{\Rank}{\operatorname{rank}}
\newcommand{\type}{\operatorname{type}}
\newcommand{\Hom}{\operatorname{Hom}}
\newcommand{\Ext}{\operatorname{Ext}}
\newcommand{\C}{\mathfrak{C}}
\newcommand{\Tor}{\operatorname{Tor}}
\newcommand{\quot}{\operatorname{Quot}}
\newcommand{\Char}{\operatorname{char}}
\newcommand{\rk}{\operatorname{rk}}
\renewcommand{\leq}{\leqslant}
\renewcommand{\geq}{\geqslant}
\newcommand{\ds}{\displaystyle}
\newcommand{\ps}[1]{\llbracket {#1} \rrbracket}
\newcommand\scalemath[2]{\scalebox{#1}{\mbox{\ensuremath{\displaystyle #2}}}}
\title{Torsion in differentials and Berger's Conjecture}
\address{Department of Mathematics, University of Virginia, Charlottesville, VA 22904-4135, USA}
\author[Huneke]{Craig Huneke}
\email{huneke@virginia.edu}
\author[Maitra]{Sarasij Maitra}
\email{sm3vg@virginia.edu}
\address{Department of Mathematics, Indian Institute of Technology Delhi, India}
\author[Mukundan]{Vivek Mukundan}
\email{vmukunda@iitd.ac.in}
\subjclass[2010]{Primary: 13N05. Secondary: 13H10}
\keywords{module of differentials, Berger Conjecture, reduced curves}
\dedicatory{This paper is dedicated to J\"urgen Herzog, whose  fundamental\\ research in commutative algebra has inspired researchers for  50 years.}
\begin{document}
\maketitle
\begin{abstract}
Let $(R,\m,\k)$ be an equicharacteristic one-dimensional complete local domain over an algebraically closed field $\k$ of characteristic $0$. R. Berger conjectured that $R$ is regular if and only if the universally finite module of differentials $\Omega_R$ is a torsion-free $R$ module. We give new cases of this conjecture by extending works of G\"uttes (\cite{Guttes1990}) and Corti\~{n}as, Geller  and Weibel (\cite{ABC1}). This is obtained by constructing a new subring $S$  of $\Hom_R(\m,\m)$ and constructing enough torsion in $\Omega_S$, enabling us to pull back a nontrivial torsion to $\Omega_R$.
\end{abstract}

\section{Introduction} This paper gives new cases of a conjecture made by R. Berger in 1963 \cite{Berger63}. Let $\k$ be an algebraically closed field of characteristic $0$, and let
$(R,\mathfrak{m}_R,\k)$ be  an equicharacteristic reduced one-dimensional complete local $\k$-algebra.  Berger conjectured that the universally finite module of differentials, $\Omega_R$,
is torsion-free if and only if $R$ is regular.  The case in which $R$ is regular is easy, since in that case $\Omega_R$ is free. Hence another formulation of
his conjecture is that $\Omega_R$ is torsion-free if and only if it is a free $R$-module.

There are many approaches to the conjecture which have been partially successful. We refer  \cite{Berger63}, \cite{scheja1970differentialmoduln}, \cite{herzog1971wertehalbgruppe}, \cite{Bassein}, \cite{Herzog78}, \cite{Buchweitz}, \cite{Ulrich81}, \cite{Koch}, \cite{HerzogWaldi84},  \cite{HerzogWaldi86}, \cite{yoshino1986torsion}, \cite{Berger88}, \cite{pohl1989torsion1}, \cite{Guttes1990}, \cite{Hubl}, \cite{Isogawa}, \cite{Pohl91}, \cite{ABC1}, \cite{ABC2} and \cite{maitra2020partial} for these approaches. A very nice summary of a majority of these results, along with the main ideas of proofs, can be found in \cite{Berger_article}.

Our generalizations have to do with how the conductor $\C_R=R:_{\quot(R)}\overline{R}$ of $R$ sits inside $R$.  We first prove that if the conductor is not in the square of the maximal ideal, then Berger's conjecture is true (\Cref{valuation more than conductor}). When the conductor is in the square of the maximal ideal, we construct a certain subring $S$ of $\Hom_R(\m_R,\m_R)$. By construction, there always exists torsion in $\Omega_S$. We show that if there are enough torsion elements in $\Omega_S$, we can construct a nonzero torsion element in $\Omega_R$ (\Cref{thm on one more torsion}). One of the first cases we prove is if $S$ is quasi-homogeneous then
Berger's conjecture is true (\Cref{quasihomogeneous}), generalizing a result of Scheja \cite{scheja1970differentialmoduln}. 

Let $x$ be a minimal reduction of the maximal ideal $\m_R$. The next set of results depends on which power of the maximal ideal $\m_R$ is contained in the conductor $\C_R$ of $R$. We study the quantity $s(R):=\dim_\k\frac{\(\C_R,x\)}{(x)}$ which we shall refer to as the \textit{reduced type} of $R$. The terminology reduced type is natural due to the fact that $\frac{(\C_R,x)}{(x)}\subseteq \frac{(x):\m}{(x)}$ and the $\k$-dimension of the latter module is precisely the type of $R$.

Our main results extend both those of G\"uttes (\cite{Guttes1990}), who proved that if either $\m_R^4\subseteq xR$ or $R$ is Gorenstein and $\m_R^5\subseteq xR$, then Berger's conjecture holds, and of Corti\~{n}as, Geller, and Weibel (\cite{ABC1}), who
proved that if $\m_R^3\subseteq \C_R$, then Berger's conjecture holds. 

	
 We summarize our extensions below, with $\edim$ denoting the embedding dimension (\Cref{m^4 in C} and \Cref{m^6 in C}):

\begin{theoremx}
Let $(R,\mathfrak{m}_R,\k)$ be  an equicharacteristic one-dimensional complete local domain over an algebraically closed field $\k$ of characteristic $0$,
$x$ be a minimal reduction of $\mathfrak{m}_R$ and $\C_R$ be the conductor ideal of $R$ in its integral closure. Further, let $n=\edim R$ and $s(R)$ be the reduced type of $R$. Then Berger's conjecture is true in the following cases:
\begin{enumerate}
\item $\m_R^4\subseteq (\C_R, x)$ and $2\cdot s(R)\leq n(n- 3)$,
\item $\m_R^6\subseteq (x)$, $n\geq 6$ and $R$ is Gorenstein.
\end{enumerate}
\end{theoremx}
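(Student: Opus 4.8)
The plan is to reduce both statements to the machinery already announced in the introduction, namely \Cref{valuation more than conductor}, \Cref{thm on one more torsion}, and the construction of the auxiliary ring $S\subseteq\Hom_R(\m_R,\m_R)$. First observe that we may assume $\C_R\subseteq\m_R^2$: if not, then $\C_R$ is not contained in the square of the maximal ideal and Berger's conjecture holds by \Cref{valuation more than conductor}. So in both cases (1) and (2) we work under the standing hypothesis $\C_R\subseteq\m_R^2$, which is exactly the regime in which the ring $S$ and its module of differentials $\Omega_S$ carry the relevant torsion. The strategy in each case is the same: count, from below, the number of independent torsion elements that $\Omega_S$ is guaranteed to possess (this count is governed by how deep a power of $\m_R$ sits in $(\C_R,x)$ or in $(x)$, measured against $\edim R$ and the reduced type $s(R)$), check that this number exceeds the threshold required by \Cref{thm on one more torsion}, and then pull back a nonzero torsion element to $\Omega_R$, forcing $\Omega_R$ to have torsion unless $R$ is regular.

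For part (1), the hypothesis $\m_R^4\subseteq(\C_R,x)$ means that modulo the minimal reduction $x$, the fourth power of the maximal ideal already lies in the conductor part, so the length computations that produce torsion in $\Omega_S$ only need to be carried out in the Artinian quotient $R/(x)$, where $\m_R^4$ is captured. I would set $n=\edim R$ and express the dimension of the space of relations among the generators of $\m_R$ (equivalently, a lower bound for the torsion in $\Omega_S$, or in the relevant syzygy module) in terms of $n$; the generic such count is on the order of $\binom{n}{2}=\tfrac12 n(n-1)$ Koszul-type relations, and the ``defect'' one must beat is controlled by the reduced type. The inequality $2\cdot s(R)\leq n(n-3)$ is precisely what remains after subtracting the contribution of the conductor (which accounts for $s(R)$ worth of the generators being ``absorbed'') and the $n-1$ or so relations that do not survive as honest torsion; rearranging, $\tfrac12 n(n-1) - (n-1) - s(R) \geq 0$ is equivalent, up to the bookkeeping, to $2s(R)\le n(n-3)$, giving strictly more torsion in $\Omega_S$ than \Cref{thm on one more torsion} needs in order to conclude.

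For part (2), with $R$ Gorenstein, $\m_R^6\subseteq(x)$, and $n\geq 6$, the Gorenstein hypothesis pins down the type to be $1$, so the symmetry of the minimal free resolution of $R/(x)$ (a zero-dimensional Gorenstein quotient) can be exploited: the socle is one-dimensional and sits in degree controlled by the fact that $\m_R^6\subseteq(x)$ but $\m_R^5\not\subseteq(x)$ in the extremal case. Here one counts torsion in $\Omega_S$ using the Hilbert function of $R/(x)$, which for a Gorenstein Artinian algebra is symmetric; the condition $n\geq 6$ is what guarantees that the ``middle'' of this Hilbert function is wide enough for the count of torsion elements to clear the threshold of \Cref{thm on one more torsion}. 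This extends G\"uttes's $\m_R^5\subseteq xR$ Gorenstein result by one more power, at the cost of the mild embedding-dimension restriction $n\geq 6$, which is needed only because for small $n$ the symmetric Hilbert function is too short to produce enough torsion.

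The main obstacle, in both parts, will be the torsion count in $\Omega_S$ and the verification that it strictly exceeds the bound required by \Cref{thm on one more torsion}: the construction of $S$ adds one generator coming from a well-chosen element of $\Hom_R(\m_R,\m_R)\setminus R$, and one must track carefully how the new relations in $S$ interact with the old relations in $R$ to avoid double-counting, and how passing from $\Omega_S$-torsion back to $\Omega_R$-torsion can lose elements. I expect the Gorenstein case to be the more delicate of the two, since there the symmetry both helps (it determines the type) and constrains (it forces the Hilbert function to be symmetric, so the torsion can only come from the middle), and the inequality $n\geq 6$ is tight against that constraint.
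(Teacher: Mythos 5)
Your skeleton agrees with the paper's (reduce to $\C_R\subseteq\m_R^2$ via \Cref{valuation more than conductor}, form $S=R[\frac{\C_R}{x_1}]$, produce more than $ns+\binom{s}{2}$ independent torsion elements in $\Omega_S$, pull one back by \Cref{thm on one more torsion}), but the quantitative engine that makes both cases work is missing, and your substitute counts would not deliver it. For case (1) the paper does not count ``Koszul-type relations'' in $R/(x)$: it first transfers the hypothesis to $S$ via \Cref{powerofm} ($\m_S^4\subseteq x_1S$ if and only if $\m_R^4\subseteq(\C_R,x_1)$) and then applies G\"uttes's length bound (\Cref{gutteslemma}(a)) \emph{to the ring $S$}, giving $\ell(0:_{\Omega_S}x_1)\geq\frac{(n+s-2)(n+s-1)}{2}$; comparing this with the threshold $ns+\binom{s}{2}+1$ is literally the inequality $n(n-3)\geq 2s$. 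Your count is phrased in terms of $n=\edim R$ rather than $\edim S=n+s$, is not justified by any stated lemma, and the arithmetic does not close: $\frac12 n(n-1)-(n-1)-s\geq 0$ is $2s\leq(n-1)(n-2)$, not $2s\leq n(n-3)$. Without G\"uttes's lemma applied to $S$ (plus the observation, via \Cref{last row unit}, that elements of $0:_{\Omega_S}x_1$ have non-unit coefficients on $dT_1,\dots,dT_s$, which \Cref{thm on one more torsion} requires) there is no proof that $\Omega_S$ has enough torsion. Note also that $S$ is obtained by adjoining $s$ new generators $T_1,\dots,T_s$, not a single element of $\Hom_R(\m_R,\m_R)\setminus R$; the threshold $ns+\binom{s}{2}+1$ is computed against exactly these $s$ extra variables.

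For case (2) the decisive step is not Hilbert-function symmetry of $R/(x)$. The paper converts $\m_R^6\subseteq x_1R$ into $\m_R^5\subseteq(\C_R,x_1)$ by a socle identification: one always has $(\C_R,x_1)\subseteq x_1:\m_R$ since $\m_R\C_R=x_1\C_R$, and when $R$ is Gorenstein the quotient $\bigl(x_1:\m_R\bigr)/(x_1)$ is one-dimensional while $(\C_R,x_1)/(x_1)\neq 0$ (the conductor never lies in a proper principal ideal), so $(\C_R,x_1)=x_1:\m_R$ and $\m_R^6\subseteq x_1R$ yields $\m_R^5\subseteq(\C_R,x_1)$. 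From there \Cref{powerofm} gives $\m_S^5\subseteq x_1S$, G\"uttes's second bound (\Cref{gutteslemma}(b)) gives $\ell(0:_{\Omega_S}x_1)\geq\frac{(n+s-2)(n+s-1)}{2}-\type(S)$, and the comparison $\type(S)\leq\type(R)+s(n-1)$ (\Cref{typecomparisonlemma}) reduces everything, via \Cref{mainthm} with $s=1$, to $\type(R)=1\leq\binom{n}{2}-2n$, which is exactly where $n\geq 6$ enters --- it is a numerical threshold, not a statement about the width of a symmetric Hilbert function. Your proposal never explains how $\m_R^6\subseteq(x)$ becomes usable information about $S$, nor how symmetry of the Hilbert function would manufacture the required $ns+\binom{s}{2}+1$ independent torsion elements, so in both parts the central counting step is a genuine gap.
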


We give other generalizations which relate to the structure of the ring $S$. In particular, when the length of $S/R$ is one, we obtain results similar in spirit to the case in which $R$ is Gorenstein, without having to assume the Gorenstein property. 

The structure of the article is as follows: \Cref{conductor not in m^2} takes care of the case when the conductor $\C_R$ is not contained in the square of the maximal ideal $\m_R$ (\Cref{valuation more than conductor}) and some related cases. \Cref{basics of S} gives the details of the construction of $S$ and its basic properties (\Cref{conductorbasicprops}, \Cref{canonical module of S} and \Cref{kernelofnew_proof}). And finally, the main results are presented in \Cref{main results} (\Cref{m^4 in C}, \Cref{mainthm} and \Cref{m^6 in C}).

\medskip

\section{Setting and Preliminaries}\label{prelims}
Throughout this paper we assume that $\k$ is an algebraically closed field of characteristic $0$, and $(R,\mathfrak{m}_R,\k)$ is a one-dimensional complete equicharacteristic local $\k$-algebra which is a domain with embedding dimension (denoted by $\edim$) $n$, i.e. $\mu_R(\mathfrak{m}_R)=n$ where $\mu_R(M)$ denotes the minimal number of generators for any $R$-module $M$. We also denote the length of any $R$-module $M$ by $\ell(M)$. 

Choosing $t$ to be a uniformizing parameter of the integral closure $\overline{R}$ of $R$,  we may assume that
$\overline{R}=\k\ps{t}$. It follows we can write $R=\k\ps{\alpha_1 t^{a_1},\dots,\alpha_nt^{a_n}}$ where $\alpha_i$'s are units in $\overline{R}$ and $a_1\leq a_2\leq \cdots\leq a_n$.  Note also that here $\ds \bar{R}$ is finitely generated over $R$ (see for example \cite{SwansonHuneke}[Theorem 4.3.4]).

We define an epimorphism 
\begin{align}
\Phi:P=\k\ps{X_1,\dots,X_n}&\twoheadrightarrow R\\
\Phi(X_i)&=\alpha_it^{a_i}\text{ for } 1\leq i\leq n.\nonumber 
\end{align}
We denote the kernel of $\Phi$ by $I=(f_1,\dots,f_m)$ and hence have the natural isomorphism $R\cong \k\ps{X_1,\dots,X_n}/I$. Since $\edim R=n$, $I$ is contained in $\m^2$ where $\m=(X_1,\dots,X_n)$, the maximal ideal of $\k\ps{X_1,\dots,X_n}$.  Such rings are called \textit{analytic $\k$-algebras}. We will interchangeably use $\alpha_i t^{a_i}$ for $x_i$, the images of $X_i$ in the quotient $\k\ps{X_1,\dots,X_n }/I$. 

We also define the valuation $\ord(-)$ on $\overline{R}$ given by $\ord(p(t))=a$ if $p(t)=t^a\alpha$ where $\alpha$ is a unit in $\k\ps{t}$  (see for example \cite[Example 6.7.5]{SwansonHuneke}).

\subsection{Universally Finite Module of Differentials}

\begin{definition}\label{defmoduleofdiff}
	Let $R$ be an analytic one-dimensional $\k$-algebra as above, which is a domain. Let $I=(f_1,\dots,f_m)$ where $f_j\in P=\k\ps{X_1,\dots,X_n}$. We assume that $I\subseteq \m_P^2$ where $\m_P=(X_1,\dots,X_n)$. Then the \textit{universally finite module of differentials over $\k$}, denoted by $\Omega_{R}$, has a (minimal) presentation given as follows:
	$$R^{m}\xrightarrow{\left[\frac{\partial f_j}{\partial x_i}\right]} R^n\to \Omega_{ R}\to 0$$ where $\left[\frac{\partial f_j}{\partial x_i}\right]$ is the Jacobian matrix of $I$, with entries in $R$. 
\end{definition}
We refer the reader to the excellent resource \cite{Kunzbook} for more information.

Let $\ds \tau(\Omega_{ R})$ denote the torsion submodule of $\Omega_{ R}$. The conjecture of interest in this article is the following: 
\begin{conjecturex}[R. W. Berger \cite{Berger63}]
	Let $\k$ be a perfect field and let $R$ be a reduced one-dimensional analytic $\k$-algebra. Then $R$ is regular if and only if $\ds \tau(\Omega_{ R})=0$. 
\end{conjecturex}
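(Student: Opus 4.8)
The plan is to prove the two implications separately; the reverse implication is the substantive one. If $R$ is regular then $R\cong\k\ps{t}$ and $\Omega_R$ is free of rank one, hence torsion-free, so that direction is immediate. For the converse I would argue contrapositively: assuming $R$ is not regular, equivalently $n=\edim R\geq 2$, the goal is to exhibit a nonzero element of $\tau(\Omega_R)$. Throughout I would work with the presentation $R\cong P/I$ from \Cref{defmoduleofdiff} and the Jacobian presentation of $\Omega_R$, together with the normalization $\overline{R}=\k\ps{t}$ and the valuation $\ord$.

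The first reduction is according to how the conductor $\C_R=R:_{\quot(R)}\overline{R}$ sits inside $R$. If $\C_R\not\subseteq\m_R^2$, then $\C_R$ has a generator of order equal to $\ord$ of a minimal generator of $\m_R$, and one can produce torsion in $\Omega_R$ by a direct order/valuation computation applied to $d$ of a suitable element; this is \Cref{valuation more than conductor}, and I would dispose of it first since it uses none of the later machinery. So one may assume henceforth that $\C_R\subseteq\m_R^2$.

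The heart of the argument is then to pass to a slightly larger birational overring. I would take $S$ to be a carefully chosen subring of $\Hom_R(\m_R,\m_R)$; since $R$ is not regular this strictly contains $R$, is again a one-dimensional complete local $\k$-algebra, and has the same normalization $\k\ps{t}$. The functoriality of K\"ahler differentials for $R\hookrightarrow S$ gives a comparison between $\Omega_R$ and $\Omega_S$, and by construction $\Omega_S$ is forced to carry torsion on general grounds (\Cref{thm on one more torsion} and the discussion around it, with the basic properties of $S$, its canonical module, and the relevant kernel supplied by \Cref{conductorbasicprops}, \Cref{canonical module of S}, and \Cref{kernelofnew_proof}). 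The strategy is: if $\Omega_S$ carries \emph{sufficiently much} torsion, more than can be absorbed by the relative differentials $\Omega_{S/R}$ and the cokernel of multiplication by $\m_R$, then some of it survives and produces nonzero torsion in $\Omega_R$. Quantifying "sufficiently much" is exactly where the numerical hypotheses enter: the auxiliary lengths are bounded above in terms of $n$, while the torsion forced into $\Omega_S$ is bounded below in terms of the reduced type $s(R)=\dim_\k\tfrac{(\C_R,x)}{(x)}$, and inequalities such as $2\cdot s(R)\leq n(n-3)$ under $\m_R^4\subseteq(\C_R,x)$, or the Gorenstein hypothesis together with $\m_R^6\subseteq(x)$ and $n\geq 6$, are precisely what make the bookkeeping close in favor of survival. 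When $S$ happens to be quasi-homogeneous I would instead invoke Scheja's theorem directly for $S$ (\Cref{quasihomogeneous}).

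The step I expect to be the main obstacle is controlling the comparison between $\Omega_S$ and $\Omega_R$: torsion in $\Omega_S$ can be destroyed on restriction along $R\hookrightarrow S$, so one must identify torsion classes in $\Omega_S$ that demonstrably contribute nontrivially to $\Omega_R$, and estimate the loss sharply. This is where the explicit structure of $S$ inside $\Hom_R(\m_R,\m_R)$ and the computations of its invariants are used, and it is the reason the final statements are a list of numerically constrained cases (\Cref{m^4 in C}, \Cref{mainthm}, \Cref{m^6 in C}) rather than an unconditional theorem.
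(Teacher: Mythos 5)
There is a fundamental gap here, and it is not one of detail: the statement you are asked to prove is Berger's Conjecture itself, which the paper does not prove and which remains open. The paper (and your outline, which faithfully reproduces its strategy) only establishes the conjecture in special cases, and your own closing sentence concedes this: after reducing to $\C_R\subseteq\m_R^2$ and passing to $S=R[\frac{\C_R}{x_1}]$, the pullback mechanism of \Cref{thm on one more torsion} requires at least $ns+\binom{s}{2}+1$ $\k$-linearly independent torsion elements of $\Omega_S$ with non-unit entries in the rows corresponding to $dT_1,\dots,dT_s$. What is available unconditionally is only $\binom{s}{2}$ such elements (the $\gamma_{ij}$), and even the claimed $ns+\binom{s}{2}$ is deferred by the authors to future work; the single extra element needed is produced only under additional hypotheses — quasi-homogeneity of $S$ (\Cref{quasihomogeneous}), or the numerical conditions $\m_R^4\subseteq(\C_R,x_1)$ with $2s\leq n(n-3)$ (\Cref{m^4 in C}), the type bound in \Cref{mainthm}, or the Gorenstein case $\m_R^6\subseteq x_1R$, $n\geq 6$ (\Cref{m^6 in C}), all of which rest on G\"uttes's length estimates. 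For a non-regular $R$ with $\C_R\subseteq\m_R^2$ failing every one of these hypotheses, your argument produces nothing, so the contrapositive direction is not established.

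Two further mismatches with the statement as posed: the statement concerns an arbitrary \emph{reduced} one-dimensional analytic algebra over a \emph{perfect} field, whereas the entire machinery you invoke (and the paper's standing assumptions) requires $R$ to be a domain over an algebraically closed field of characteristic $0$ — the monomialization step in \Cref{valuation more than conductor} uses Hensel's lemma and characteristic $0$ in an essential way, and the paper explicitly says its techniques do not apply otherwise. Also, the easy direction should not be phrased as "$R$ regular $\Rightarrow$ $R\cong\k\ps{t}$" without the residue field hypothesis; in the paper's setting this is fine, but for a general perfect $\k$ one argues simply that $\Omega_R$ is free when $R$ is regular. In short: the forward implication is fine, the reverse implication is exactly the open problem, and the strategy you describe is the paper's partial-results program, not a proof of the statement.
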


Although the conjecture is for reduced algebras, in this paper we only deal with the case in which $R$ is a domain and $\k$ is algebraically closed of characteristic zero. Our techniques do not immediately seem
to apply otherwise.

\begin{remark}\label{rankomega}
	When $\k$ is a perfect field, it is well-known that
	$\Rank_R(\Omega_{R})=\dim(R)=1$. Hence from \Cref{defmoduleofdiff}, we get that $\Rank A=n-1$ where $A=\left[\frac{\partial f_j}{\partial x_i}\right]$ is the Jacobian matrix of $I$, with entries in $R$.
\end{remark}
\begin{remark}
	It is clear from \Cref{defmoduleofdiff}, that $\tau(\Omega_{ R})=0$ when $R$ is regular. Thus, from now on we assume that $n\geq 2$. 
\end{remark}

\subsection{The Conductor}\label{conductor def}
The conductor ideal $\cC_R$ will be crucial for the purposes of this paper. Recall that the conductor is the largest common ideal of $R$ and its
integral closure, $\overline R $.  It follows that $\ds \cC_R=R:_Q\overline{R}$ where $Q=\quot(R)$, denotes the fraction field of $R$. Since $\ds \overline{R}=k\ps{t}$ and $\cC_R$ is an ideal of $\bar{R}$ as well, we have that $\cC_R=(t^i)_{i\geq c_R}$ where $c_R$ is the least integer such that $t^{c_R-1}\not \in R$, and $t^{c_R+i}\in R$ for all $i\geq 0$. The number $c_R$ is characterized as the least valuation in $\cC_R$. It is clear from this  discussion that there cannot be any element $r\in R$, such that $v(r)=c_R-1$.  Since $\ds \bar{R}$ is finitely generated over $R$ (\cite[Theorem 4.3.4]{SwansonHuneke}),  the conductor ideal is a nonzero ideal of $R$, and it is never all of $R$ unless $R$ is regular.

\subsection{Computing Torsion}
We have the following commutative diagram  using the functorial universal properties of the module of differentials and the associated universal derivations.

\[\begin{tikzcd}
\Omega_{R}\ar[r,"f"]  & \Omega_{\overline{R}}\\
R\ar[r,hook,"i"] \ar[u,"d"] & \overline{R}\ar[u, swap, "d"]
	\end{tikzcd}
\]
We use the same symbols $d$ for both the vertical maps.
Since, $\rk(\Omega_R)=\rk(\Omega_{\overline{R}})=1$ ($R$ and $\bar{R}$ have the same fraction field), and  $\Omega_{\overline{R}}$ is free over $\overline{R}$, we get that the $\tau(\Omega_R)=\ker f$.  

Also note that by commutativity of the diagram, 

$$f(dx_i)=\frac{dx_i}{dt}dt.$$

Since $\Omega_{\overline{R}}$ is isomorphic to $\overline{R}$, we  see that $\Omega_{R}$ surjects to a $R$-submodule $\ds \sum_{i=1}^nR\frac{dx_i}{dt}$ of $\ds \overline{R}=\k\ps{t}$. This is a fractional ideal in $\overline{R}$, so multiplying by a suitably high enough power of $t$, it is isomorphic to an ideal of $R$.

The torsion submodule $\tau(\Omega_R)$  is the kernel of the map $\Omega_R\rightarrow\Omega_{\overline{R}}$. Thus, from the above discussion, we get that $\ds \tau(\Omega_R)$  consists of the tuples $\begin{bmatrix}
r_1\\\vdots\\r_n
\end{bmatrix}$ such that $\ds \sum_{i=1}^nr_i\frac{dx_i}{dt}=0$. Evidently, $\tau(\Omega_R)$ is non-zero precisely when the tuples $\begin{bmatrix}
r_1\\\vdots\\r_n
\end{bmatrix}$ are not in the image of the presentation matrix (Jacobian matrix of $I$) of $\Omega_R$, all entries written in terms of the uniformizing parameter $t$.  This provides one computational way of computing torsion using Macaulay 2. 

\begin{example}
$R=\ps{t^3,t^4,t^5}$ and its defining ideal $I=(y^2-xz,z^2-x^2y,x^3-yz)$ in $\k\ps{x,y,z}$. 
Consider the element $$\tau=4ydx-3xdy=\begin{bmatrix}
4y\\-3x\\0
\end{bmatrix}\hspace{-1em}
\begin{array}{c;{2pt/2pt}c}
~&~\\ ~&~\\ ~&~\\
\end{array}\hspace{-1em}
\overbrace{
	\begin{array}{c}
	dx \\ dy \\dz
	\end{array}
}^{\text{basis}}$$ in $\Omega_R$. Clearly $4y\frac{dx}{dt}-3x\frac{dy}{dt}=4t^4(3t^2)-3t^3(4t^3)=0$. Now the presentation matrix of $\Omega_{R/\m_R^2}$ is
\begin{align*}
\begin{bmatrix}
2x & y & z & 0 & 0 &0\\
0 & x & 0 & 2y &z &0\\
0 & 0 & x & 0 &y & 2z
\end{bmatrix}.
\end{align*}
Since the image of $\tau$ in $\Omega_{R/\m_R^2}$ can never be written as a linear combination of the columns of the above presentation, $\tau$ is nonzero in $\Omega_{R/\m_R^2}$. Thus $\tau$ is nonzero in $\Omega_R$ as well.
\end{example}
\section{Nonzero torsion when $\mathfrak{C}_R\not\subseteq\m_R^2$}\label{conductor not in m^2}
Throughout this section, we assume that $\k$ is an algebraically closed field of characteristic $0$, and $(R,\mathfrak{m}_R,\k)$ is a one-dimensional complete equicharacteristic local $\k$-algebra which is a domain with embedding dimension $n$. Choosing $t$ to be a uniformizing parameter of the integral closure $\overline{R}$,  we may assume that $\overline{R}=\k\ps{t}$. It follows we can write $R=\k\ps{\alpha_1 t^{a_1},\dots,\alpha_nt^{a_n}}$ where $\alpha_i$'s are units in $\overline{R}$ and $a_1\leq a_2\leq \cdots\leq a_n$. Let $\C_R$ denote the conductor ideal.

Our primary construction which appears in the sequel will make use of the condition $\C_R\subseteq \m_R^2$. So prior to our main construction, we settle the case $\C_R\not\subseteq\m_R^2$ by showing that this condition always leads to nonzero torsion in $\Omega_R$. In this case at least one of the minimal generators $x_1,\dots,x_n$ of the maximal ideal is in the conductor $\mathfrak{C}_R$. Thus, after a change of variables, the minimal generators in the conductor can be replaced by monomials (i.e., if $x_i=\alpha_it^{a_i}\in \mathfrak{C}_R$, then after a change of variables, $\alpha_i$ can be chosen to be a unit in $R$). We will see 	 in \Cref{two monomials}, the presence of monomials will lead to nonzero torsion in $\Omega_R$. 

\begin{theorem}\label{valuation more than conductor} If $\mathfrak{C}_R\not\subseteq\m_R^2$, 
	 then the torsion $\tau(\Omega_R)$ is nonzero.
\end{theorem}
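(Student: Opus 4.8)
The plan is to reduce to the situation in which one of the minimal generators of $\m_R$ is, up to a change of variables, a pure monomial $t^{a_i}$ lying in the conductor, and then to exhibit an explicit nonzero torsion element of $\Omega_R$ coming from two such generators (or from one such generator together with any other). First I would observe, as in the paragraph preceding the statement, that $\C_R\not\subseteq \m_R^2$ forces some generator $x_i = \alpha_i t^{a_i}$ to lie in $\C_R$; since $\C_R$ is an ideal of $\overline R = \k\ps t$ containing $\alpha_i t^{a_i}$, it contains $t^{a_i}$ as well, and $t^{a_i}\in R$. After the change of variables replacing $x_i$ by $t^{a_i}$ (possible because $\alpha_i$ is then realized as a unit in $R$: $\alpha_i = (\alpha_i t^{a_i})/t^{a_i}$ with both factors in $R$), we may assume $x_i = t^{a_i}$ is a monomial generator. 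If $n\geq 2$ we can, by the same reasoning applied within the list of generators or simply by noting $a_i \geq c_R$ implies $t^{a_i+1}\in R$ so that $t^{a_i}, t^{a_i+1}$ (and more) are available, arrange for a second monomial in $R$; the cleanest route is to invoke the forthcoming \Cref{two monomials}, which says precisely that the presence of two monomials among the data yields nonzero torsion — so the real content is the reduction to that hypothesis.

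Concretely, suppose after the change of variables we have two monomial generators $x_i = t^{a}$ and $x_j = t^{b}$ with $a,b$ the corresponding orders (one of them the conductor generator; if only one generator can be taken monomial, pair it with $t^{b}\in R$ for some $b$ with $\gcd$ considerations, noting $t^{a}\cdot t = t^{a+1}\in R$ since $a\geq c_R$). Then in $\Omega_R$ consider the element
\[
\eta \;=\; b\,x_j\,dx_i \;-\; a\,x_i\,dx_j .
\]
Applying the map $f\colon \Omega_R \to \Omega_{\overline R}$ of the commutative diagram in the excerpt and using $f(dx_k) = \tfrac{dx_k}{dt}\,dt$, we compute
\[
f(\eta) \;=\; \Bigl(b\,t^{b}\cdot a\,t^{a-1} \;-\; a\,t^{a}\cdot b\,t^{b-1}\Bigr)\,dt \;=\; 0,
\]
so $\eta \in \ker f = \tau(\Omega_R)$. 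It remains to show $\eta \neq 0$ in $\Omega_R$. For this I would pass to $\Omega_{R/\m_R^2}$, whose presentation matrix is the Jacobian matrix of $I$ reduced mod $\m_R$ (linear entries only), i.e. essentially the linear part of the defining relations. The coefficient vector of $\eta$, namely $b\,x_j$ in slot $i$ and $-a\,x_i$ in slot $j$ and $0$ elsewhere, has entries in $\m_R\setminus \m_R^2$ in those two slots; one checks it is not in the column span of the reduced Jacobian because any relation $f_\ell$ involving $X_i$ or $X_j$ linearly would correspond to an $R$-linear syzygy among the $x_k$'s, which cannot exist among distinct minimal generators. (This is the same mechanism illustrated in the worked example $R = \k\ps{t^3,t^4,t^5}$ with $\eta = 4y\,dx - 3x\,dy$.)

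The main obstacle is the last step: verifying that $\eta$ survives in $\Omega_{R/\m_R^2}$, equivalently that the coefficient vector of $\eta$ is not a column-combination of the (mod $\m_R$) Jacobian. The subtlety is that although $t^{a}$ and $t^{a+1}$ are both in $R$, they need not both be among a chosen minimal generating set $x_1,\dots,x_n$ of $\m_R$, so one must be careful about which monomials are genuinely available as generators versus merely as ring elements, and about whether $a_i = c_R$ exactly (in which case $x_i = t^{c_R}$ and $x_i^2 = t^{2c_R}$ may collide with other generators in low embedding dimension). I expect the paper handles this by first dispatching the truly degenerate small cases and otherwise guaranteeing genuinely two monomial minimal generators, after which \Cref{two monomials} applies verbatim; I would structure the proof as: (i) produce one monomial generator in $\C_R$; (ii) produce a second monomial generator, or reduce to a case already covered; (iii) quote \Cref{two monomials}. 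Hence, modulo \Cref{two monomials}, the proof is the change-of-variables reduction together with the explicit element $\eta$ above.
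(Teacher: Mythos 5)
Your strategy is the same as the paper's in outline — reduce to two monomial minimal generators, write down the Euler-type element $a_jx_j\,dx_i-a_ix_i\,dx_j$, check it maps to zero in $\Omega_{\overline R}$, and detect it in $\Omega_{R/\m_R^2}$ via \cite[Proposition 2.6, Corollary 2.7]{ABC1} — but there is a genuine gap in the reduction, which you flag yourself without resolving: producing the \emph{second} monomial generator. The conductor hypothesis only monomializes generators of order at least $c_R$ (if $x_i=\alpha_it^{a_i}\in\C_R$ then indeed $t^{a_i}\in\C_R\subseteq R$), and there may be exactly one such generator. Your fallback of pairing $t^{a_i}$ with $t^{a_i+1}\in R$ does not work: $t^{a_i+1}$ need not be a minimal generator of $\m_R$ — for instance $t^{a_i+1}=(\alpha_1t^{a_1})(\alpha_1^{-1}t^{a_i+1-a_1})\in\m_R\C_R\subseteq\m_R^2$ whenever $a_i+1-a_1\geq c_R$, as happens already for $R=\k\ps{\alpha_1t^2,\alpha_2t^{2k+1}}$ — while the mechanism of \Cref{two monomials} needs both monomials to be minimal generators so that $\overline{x_j}\,d\overline{x_i}\neq 0$ in $\Omega_{R/\m_R^2}$. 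The idea you are missing is the paper's conductor-free monomialization: since $\k$ has characteristic $0$, Hensel's lemma gives $\beta\in\overline R$ with $\beta^{a_1}=\alpha_1$, and the change of uniformizing parameter $s=\beta t$ turns $x_1$ into the pure power $s^{a_1}$ while every other generator remains a unit of $\overline R$ times a power of $s$. Combining this with the conductor argument applied to $x_n$ (the hypothesis $\C_R\not\subseteq\m_R^2$ forces $a_n\geq c_R$, so the tail $t^{a_n+1}b$ of $\alpha_nt^{a_n}$ lies in $\C_R\subseteq R$ and can be absorbed) yields the two monomials, and the element $a_nx_n\,dx_1-a_1x_1\,dx_n$ survives because $x_1x_n\in\m_R^2$ reduces its image to $(a_1+a_n)\overline{x_1}\,d\overline{x_n}$.

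Two further inaccuracies. First, the claim that $\alpha_i$ ``is realized as a unit in $R$'' because $\alpha_i=(\alpha_it^{a_i})/t^{a_i}$ with both factors in $R$ is not valid: a quotient of elements of $R$ only lies in $\quot(R)$ (here in $\overline R$), and in general $\alpha_i\notin R$; what is true, and all that the argument uses, is that $t^{a_i}\in R$ when $a_i\geq c_R$. Second, your sketch of nonvanishing in $\Omega_{R/\m_R^2}$ — that no defining relation involves $X_i$ or $X_j$ linearly — overlooks the columns of the presentation of $\Omega_{R/\m_R^2}$ coming from the quadrics generating $\m_R^2$ (visible in the paper's worked example $\k\ps{t^3,t^4,t^5}$); this is precisely why the paper first uses $x_1x_n\in\m_R^2$ to rewrite the element as $(a_1+a_n)\overline{x_1}\,d\overline{x_n}$ and only then invokes \cite[Corollary 2.7]{ABC1}, a step which again requires both elements to be minimal generators.
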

\begin{proof} Write  $R= \k\ps{\alpha_1t^{a_1},\dots,\alpha_nt^{a_n}}$ with conductor $\C_R=(t^{c_R})\overline{R}$. We first monomialize the $r^{th}$ term as follows:
by multiplying by a nonzero element of $\k$, we may assume that the constant term of the unit $\alpha_r$ is $1$. 
By Hensel's lemma \cite[Theorem 7.3]{eisenbud_Commalg}, there exists an element $\beta\in R$ such that $\beta^{a_r} = \alpha_r$.  Here we use that the characteristic of $\k$ is $0$.  We
write $\beta = 1 + \beta_1t+\cdots$. 
Consider the change of variables $s=\beta t$. Under this change of variables, notice that $\k\ps{t}=\k\ps{s}$. Now
\begin{align*}
	s=\beta t=t+\beta_1t^{2}+\beta_2t^{3}+\cdots.
\end{align*}
Note that $s^{a_r}=(\beta t)^{a_r}=\alpha_rt^{a_r}\in R$. Furthermore, $\alpha_it^{a_i}=\alpha_i's^{a_i},1\leq i\neq r\leq n$, where $\alpha'_i\in\overline{R}$ are units. Then $R=\k\ps{\alpha_1t^{a_1},\dots,\alpha_nt^{a_n} } = \k\ps{\alpha_1's^{a_1},\dots,\alpha_{r-1}'s^{a_{r-1}},s^{a_r},\alpha_{r+1}'s^{a_{r+1}},\dots ,\alpha_n's^{a_n}}$. We apply this change of variables  with $r = 1$ to assume without loss of generality, for the remainder of this proof, that  $R = \k\ps{t^{a_1},\alpha_2t^{a_2},\cdots,\alpha_nt^{a_n}}$. 

Since $\C_R$ is not contained in $\m_R^2$, we must have that $a_n\geq c_R$. Write $\alpha_n = \alpha_{n0} + tb$, where $\alpha_{n0}\ne 0$ is in $\k$ and $b\in \overline{R}$. 
Then $\alpha_nt^{a_n} = \alpha_{n0}t^{a_n} + t^{a_n+1}b$, where $b\in \overline{R}$. However, since $a_n\geq c_R$, it follows that $t^{a_n+1}b\in \C_R\subset R$.
Hence, $t^{a_n} = \alpha_{n0}^{-1}(\alpha_nt^{a_n}-t^{a_n+1}b)\in R$ as well, and then $R = \k\ps{t^{a_1}, \alpha_2t^{a_2},\ldots,\alpha_{n-1}t^{a_{n-1}},t^{a_n}}$.

We now use this particular form for $R$ to find a nonzero torsion element in $\Omega_R$. Namely, $a_nx_ndx_1-a_1x_1dx_n\in\Omega_R$ and the exact sequence
	\begin{align*}
		0\rightarrow\tau(\Omega_R)\rightarrow\Omega_R\xrightarrow{\phi} R\frac{dx_1}{dt}+\cdots+R\frac{dx_n}{dt}\rightarrow 0 
	\end{align*}
	where the map $\phi$ is the $R$-module map given by $\phi(dx_i)=\frac{dx_i}{dt}, 1\leq i\leq n$. Under this map 
	\begin{align*}
		\phi(a_nx_ndx_1-a_1x_1dx_n)&=a_nt^{a_n}\frac{dt^{a_1}}{dt}-a_1t^{a_1}\frac{dt^{a_n}}{dt}\\
		&=(a_1a_nt^{a_1+a_n-1}-a_na_1t^{a_1+a_n-1})dt=0
	\end{align*}
	Thus $a_nx_ndx_1-a_1x_1dx_n\in \tau(\Omega_R)$. It remains to see that it is nonzero. Consider the image of this element  $\overline{a_nx_ndx_1-a_1x_1dx_n}$ in $\Omega_{R/\m_R^2}$.  As $x_1x_n\in \m_R^2$, it follows that in $\Omega_{R/\m_R^2}$, $\overline{x_1dx_n+ x_ndx_1} = 0$. Hence $\overline{a_nx_ndx_1-a_1x_1dx_n} = (a_1+a_n)\overline{x_1}d(\overline{x_n})$ in $\Omega_{R/\m_R^2}$. 
	Now using \cite[Proposition 2.6, Corollary 2.7]{ABC1}, we have $(a_1+a_n)\overline{x_1}d(\overline{x_n})\neq 0$ in $\Omega_{R/\m_R^2}$. Thus $a_nx_ndx_1-a_1x_1dx_n\neq 0$ in $\Omega_R$.
	\end{proof}

\begin{example}\label{example with a_i is more than c}
	Let $R=\k\ps{t^4+t^5,t^7+t^{10},t^8+t^{10},t^9+t^{10}}$ . Macaulay2 computations show that the conductor is $\C_R=(t^{c_R})\overline{R}=(t^7)\overline{R}$ . Since $a_2\geq c_R$, we have $\tau(\Omega_R)\neq 0$ using the previous result. 
\end{example}

\begin{remark}\label{two monomials}
	If $\alpha_i,\alpha_j$ are units in $R$ for some $i\neq j$, then we can show that  $\tau(\Omega_R)$ is nonzero. Assuming $i=1,j=2$ we can easily see that $R=\k\ps{x_1,\dots,x_n}\cong \k\ps{t^{a_1},t^{a_2},\alpha_3t^{a_3},\dots,\alpha_nt^{a_n}}$. Notice that $a_2x_2dx_1-a_1x_1dx_2\in \tau(\Omega_R)$. This torsion element is nonzero as $\overline{a_2x_2dx_1-a_1x_1dx_2}=(a_2-a_1)\overline{x_2dx_1}$ is nonzero in $\Omega_{R/\m_R^2}$ (\cite[Corollary 2.7]{ABC1}).
\end{remark}
The above result is also a generalization of \cite[Corollary 3.7]{Isogawa}. The next example illustrates the remark.
\begin{example}
	Let $R=\k\ps{t^{8}+t^{9},t^{9}+t^{15},t^{12}+t^{20},t^{14}}$, the conductor  $\C_R=(t^{c_R})\overline{R}=(t^{20})\overline{R}$. Thus $R \cong \k\ps{t^{8}+t^{9},t^{9}+t^{15},t^{12},t^{14}}$ and hence $R$ has at least one torsion element using \Cref{two monomials}. Notice that in this case, none of the $a_i$'s are bigger than the $c_R$.
\end{example}

\section{The transform $R[\frac{\mathfrak{C}_R}{x_1}]$}\label{basics of S}
Throughout this section, we again assume that $(R,\mathfrak{m}_R,\k)$ is a one-dimensional complete equicharacteristic local $\k$-algebra which is a domain with embedding dimension $n$. Further, $\k$ is an algebraically closed field of characteristic $0$. Choosing $t$ to be a uniformizing parameter of the integral closure $\overline{R}$,  we may assume that $\overline{R}=\k\ps{t}$. Using the technique as in the first paragraph of the proof of \Cref{valuation more than conductor}, we can also write $R=\k\ps{ t^{a_1},\alpha_2t^{a_2},\dots,\alpha_nt^{a_n}}$ where $\alpha_i$'s are units in $\overline{R}$ and $a_1\leq a_2\leq \cdots\leq a_n$. Let $\C_R$ denote the conductor ideal.

In this section we study the main construction $S=R[\frac{\C_R}{x_1}]$ where $x_i=\alpha_it^{a_i}$ with $\alpha_1=1$. Throughout this section, we assume that $\mathfrak{C}_R\subseteq \m_R^2$.

\subsection{Basics of $R[\frac{\mathfrak{C}_R}{x_1}]$}
\qquad \\

We write $\frac{\mathfrak{C}_R}{x_1}$ to denote the set of elements of the form $\frac{c}{x_1}$ where $c\in \C_R$.  We note that the conductor is never inside a proper principal ideal (follows, for instance, from \cite[Corollary 2.6]{maitra2020partial}),
so there are always elements in $\frac{\mathfrak{C}_R}{x_1}$ which are not in $R$ itself. Recall from the introduction that we define the \textit{reduced type} of $R$ to be $$s(R):=\dim_\k\frac{(\C_R,x_1)}{(x_1)}.$$ We use the notation $s$ whenever the underlying ring is clear. 

\smallskip

\begin{lemma}\label{conductorbasicprops} Let $S=R[\frac{\C_R}{x_1}]$.
	The following statements hold. 
	\begin{enumerate}
		\item $ \frac{\mathfrak{C}_R}{x_1}\subseteq \overline{R}$.
		
		\item $\m_R (\frac{\C_R}{x_1})\subseteq \C_R$. In particular, $S\subset \Hom_R(\m,\m)$.
		
		\item Let $\mathfrak{C}_R\subseteq \m_R^2$. Let $\alpha, \beta\in \frac{\mathfrak{C}_R}{x_1}$.  Then $\alpha\beta\in \mathfrak{C}_R$.
		
		\item Let $\ds \mathfrak{C}_S$ denote the conductor of $S$ in $\overline{R}$. Then $ \mathfrak{C}_S=\frac{\mathfrak{C}_R}{x_1}$.
		
\item $S/R$ is a vector space over $\k$ of dimension $s$, where $s$ is the reduced type of $R$.
	\end{enumerate}
\end{lemma}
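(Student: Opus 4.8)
The plan is to compute the $\k$-dimension of $S/R$ by exhibiting a natural $\k$-linear isomorphism between $S/R$ and $\frac{(\C_R,x_1)}{(x_1)}$, whose dimension is $s(R)$ by definition. First I would observe that since $\C_R$ is an ideal of $\overline{R}$ with $\C_R = (t^{c_R})\overline{R}$, multiplication by $x_1 = t^{a_1}$ (recall $\alpha_1 = 1$) gives a $\k$-vector space isomorphism $\C_R \xrightarrow{\ \cdot x_1\ } x_1\C_R$, and more relevantly, division by $x_1$ gives a $\k$-linear (indeed $R$-linear) isomorphism from the $R$-submodule $x_1 R \cap \C_R \subseteq R$... but it is cleaner to go the other way: consider the map $\mu \colon \frac{\C_R}{x_1} \to \overline{R}$ that is just multiplication by $x_1$, which is injective and has image $\C_R$. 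Since $R + \frac{\C_R}{x_1} = S$ as $\k$-vector spaces (by part (3), products of elements of $\frac{\C_R}{x_1}$ land in $\C_R \subseteq R$, so $R + \frac{\C_R}{x_1}$ is already a ring, hence equals $S = R[\frac{\C_R}{x_1}]$), we get
\[
S/R \;=\; \bigl(R + \tfrac{\C_R}{x_1}\bigr)\big/ R \;\cong\; \tfrac{\C_R}{x_1}\Big/\Bigl(\tfrac{\C_R}{x_1}\cap R\Bigr)
\]
as $\k$-vector spaces.

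Next I would identify $\frac{\C_R}{x_1}\cap R$. An element $\frac{c}{x_1}$ with $c \in \C_R$ lies in $R$ precisely when $c \in x_1 R$, i.e. when $c \in x_1 R \cap \C_R$. Applying the isomorphism $\mu$ (multiplication by $x_1$), we therefore get
\[
S/R \;\cong\; \C_R \big/ \bigl(x_1 R \cap \C_R\bigr).
\]
Now I would use the standard isomorphism $\C_R/(x_1R\cap \C_R) \cong (\C_R + x_1 R)/x_1 R = \frac{(\C_R, x_1)}{(x_1)}$, which is exactly $s(R)$-dimensional by definition. Chaining these identifications yields $\dim_\k S/R = s$, which is the claim. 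One should double-check that $x_1 R$ here means the ideal of $R$ generated by $x_1$, matching the notation $(x_1)$ in the definition of $s(R)$; since $x_1$ is the chosen minimal reduction this is consistent with the convention set in the introduction.

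The only genuine content to verify carefully — and the step I expect to need the most care — is the equality $S = R + \frac{\C_R}{x_1}$ of $\k$-subspaces of $\overline{R}$, i.e. that adjoining $\frac{\C_R}{x_1}$ to $R$ does not produce anything beyond the span of $R$ and $\frac{\C_R}{x_1}$. This is where the hypothesis $\C_R \subseteq \m_R^2$ enters: by part (3) of the lemma, any product $\alpha\beta$ with $\alpha,\beta \in \frac{\C_R}{x_1}$ lies in $\C_R \subseteq R$, and by part (1) together with part (2), $R \cdot \frac{\C_R}{x_1} \subseteq \frac{\C_R}{x_1}$ (since $\frac{\C_R}{x_1}$ is an $R$-module: $r\cdot \frac{c}{x_1} = \frac{rc}{x_1}$ and $rc \in \C_R$). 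Hence the $\k$-span $R + \frac{\C_R}{x_1}$ is closed under multiplication, contains $R$ and $\frac{\C_R}{x_1}$, and is contained in $\overline{R}$, so it is a subring containing both $R$ and $\frac{\C_R}{x_1}$; minimality of $S = R[\frac{\C_R}{x_1}]$ forces $S = R + \frac{\C_R}{x_1}$. Everything else is the routine bookkeeping of second-isomorphism-theorem identifications, so the proof is short once this closure observation is in place.
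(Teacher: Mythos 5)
Your argument for item (5) is correct and, in substance, the same as the paper's: the paper defines a $\k$-linear map $\eta\colon (\C_R,x_1)/(x_1)\to S/R$ sending $\overline{c}\mapsto c/x_1$ and checks bijectivity, which is precisely the inverse of your chain of identifications $S/R\cong \frac{\C_R}{x_1}\big/\bigl(\frac{\C_R}{x_1}\cap R\bigr)\cong \C_R/(\C_R\cap x_1R)\cong (\C_R,x_1)/(x_1)$ via the second isomorphism theorem; your extra care about the equality $S=R+\frac{\C_R}{x_1}$ is implicit in the paper's assertion that $S/R$ is spanned by the classes of the elements $c/x_1$, so for this item the two arguments coincide.

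The genuine gap is that the statement is the entire lemma, with five parts, and your proposal proves only part (5). Parts (1)--(3) are not proved but are invoked as ingredients (``by part (3) of the lemma'', ``by part (1) together with part (2)''), which makes the proposal circular as a proof of the lemma as stated; and part (4), the identification $\C_S=\frac{\C_R}{x_1}$ of the conductor of $S$, is not addressed at all and does not follow from anything you wrote. None of the missing pieces is difficult, but they must be supplied: (1) follows because $(x_1)$ is a minimal reduction of $\m_R$, so $\m_R\overline{R}=x_1\overline{R}$ and hence $\frac{\C_R}{x_1}\subseteq\frac{\m_R}{x_1}\subseteq\overline{R}$; (2) follows from $\m_R\C_R=x_1\C_R$; (3) follows, using $\C_R\subseteq\m_R^2$, from $\C_R^2\subseteq\m_R^2\C_R=x_1^2\C_R$; and (4) requires a separate valuation argument, namely that every valuation at least $c_R-a_1$ occurs in the value semigroup of $S$, while no element of $S$ can have valuation $c_R-a_1-1$ (if $r\in S$ had that valuation, then $rx_1\in R$ by (2) would have valuation $c_R-1$, contradicting the definition of $c_R$), so that $\C_S=(t^{c_R-a_1})\overline{R}\cap S$ is exactly $\frac{\C_R}{x_1}$. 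Until these are added, the proposal is an essentially complete proof of one item resting on the unproved remainder of the lemma.
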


\begin{proof} Since $\ds (x_1)$ is a minimal reduction of $\m_R$, we have $\frac{\m_R}{x_1}\subseteq \overline{R}$. Hence $ \frac{\mathfrak{C}_R}{x_1}\subseteq \overline{R}$ proving $(1)$.

For $(2)$ note that $\ds \m_R\overline{R}=x_1\overline{R}$ and hence $\ds \m_R\mathfrak{C}_R=x_1\mathfrak{C}_R$. So, $\m_R\frac{\mathfrak{C}_R}{x_1}\subseteq \mathfrak{C}_R\subseteq \m_R$.

Write $\alpha=\frac{c}{x_1},\beta=\frac{c'}{x_1}$ where $\ds c,c'\in \mathfrak{C}_R$. For $(3)$, first note that $\ds cc'\in \mathfrak{C}_R^2\subseteq \mathfrak{C}_R\m_R^2=\mathfrak{C}_R x_1\m_R =x_1^2\mathfrak{C}_R$ as in the proof of $(2)$. This proves $(3)$.

Finally, note that every valuation more than $c_R-a_1-1$ is present in the valuation semi-group of $S$. But there cannot be any element with valuation $c_R-a_1-1$ in $S$: if possible, let $r$ be such an element. Then by $(2)$, $rx_1\in R$ and has valuation $c_R-1$, a contradiction to the choice of $c_R$. This finishes the proof of $(4)$.

For (5), first note that (2) clearly implies that $S/R$ is a vector space. Now $S/R$ is generated as a $\k$-vector space by elements of the form $c/x_1$ such that $c\in \C_R$. Choose a basis $\overline{c_1},\dots,\overline{c_s}$ for the $\k$-vector space $(\C_R,x_1)/(x_1)$. Now construct a $\k$-linear map $\eta:(\C_R,x_1)/(x_1)\rightarrow S/R$ by mapping $\eta(\overline{c_i})=c_i/x_1$. Suppose $\overline{c}=\sum_{i}k_i\overline{c_i}$ with $k_i\in \k$ be such that $\eta(\overline{c})=0$. Then we get $\sum_ik_i\frac{c_i}{x_1}=0$ in $S/R$. It follows that $(\sum_ik_ic_i)/x_1\in R$ and hence $\sum_{i}k_ic_i\in (x_1)$. This in turn implies that $\overline{c}=0$ and thus,  $\eta$ is injective. It is also surjective as any element $c/x_1$ of $S/R$ has a pre-image $\overline{c}\in (\C_R,x_1)/(x_1)$.
\end{proof}

We shall see that a decrease in the valuation of the conductor can significantly help us in gaining better understanding of torsion elements of $\Omega_R$. If we construct the ring $S=R[\frac{\mathfrak{C}_R}{x_1}]$, then \Cref{conductorbasicprops} guarantees such a drop. We try to explicitly describe the ring $S$ now. First we set up some notation.

\noindent \begin{notation}\label{conventionC/x}
	We know that $\ds \mathfrak{C}_R=(t^c,\dots,t^{c+a_1-1})R$ where $c=c_R$ is the valuation (of the conductor ideal) as discussed in \Cref{conductor def}. Hence $\ds \mathfrak{C}_R/x_1$ is generated in $\overline{R}$ (in fact it is the ideal $(t^{c-a_1})\overline{R}$)
	by the monomials
	\begin{equation}\label{powerlist} t^{c-a_1}, t^{c-a_1+1}, \cdots, t^{c-1}.\end{equation} This is true because if $\alpha_{c-a-1},\ldots,\alpha_{c-1}$ are arbitrary units of the
	integral closure $\overline{R}$, the ideal generated by $t^{c-a_1}, t^{c-a_1+1}, \cdots, t^{c-1}$ is the same as the ideal generated by $\alpha_{c-a-1}t^{c-a_1}, \ldots, \\\alpha_{c-1}t^{c-1}$.
	The new ring $S$ is constructed by adjoining $\mathfrak{C}_R/x_1$. By \Cref{conductorbasicprops}(5) $S/R$ is a $\k$ vector space of dimension $s$. Using \cite[Proposition 2.9]{herzog1971wertehalbgruppe} we see that this is generated by those powers of $t$ from \Cref{powerlist}, which are not in the valuation semigroup of $R$. We call these powers say $b_1,\dots,b_s$ in ascending order. 
\end{notation} 

\noindent\begin{remark}\label{canonicalmoduledescription}  Since $(\C_R,x_1)/(x_1)\subseteq ((x_1):\m_R)/(x_1)$ and the dimension of the latter quantity represents the type of $R$, it is clear that the reduced type $s$ is at most the type of $R$. Since $S$ never equals $R$, it is always at least one. The number $s$ can also be described as $\mu(\omega_R/\omega_S)$ where $\omega_R,\omega_S$ are canonical modules of $R,S$ respectively.
\end{remark}

\begin{proof} For the last statement, dualize the following short exact sequence into the canonical module $\omega_R$ $$0\to R\to S\to \k^s\to 0$$
gives the short exact sequence:
$$0\to \omega_S\to \omega_R\to \Ext^1_R(\k^s, \omega_R)\to 0.$$
Since the number of generators of $\omega_R$ is the type of $R$, and since $\Ext^1_R(\k^s, \omega_R)\cong \k^s$ by duality, the remark follows.
\end{proof}

When $s=1$ we say $R$ is of \textit{reduced type one}. In particular, if $R$ is Gorenstein (type of $R$ equals one), necessarily $s = 1$. The converse is not necessarily true as the next example shows.

\begin{example}
	Let $R=\k\ps{t^4,t^{11},t^{17}}$. We can check $R$ is not Gorenstein using \cite{Kunz1970}. M2 computations show that the conductor $\mathfrak{C}_R=(t^{19})R$. It follows that $S =  \k\ps{t^4,t^{11},t^{15}, t^{16}, t^{17}, t^{18}} = \k\ps{t^4,t^{11},t^{17}, t^{18}}$ and hence $R$ is of reduced type one.
\end{example}

When $\C_R\subseteq \m_R^2$, it follows that $\edim S=n+s$. Recall that canonical ideal $\omega_R$ of $R$ exists (\cite[Proposition 3.3.18 ]{bruns_herzog_1998}).
We can also prove that  $s=\mu_R\left(\frac{\omega_R}{\overline{\m_R\omega_R}\cap\omega_R}\right)$,
where $\overline{\m_R\omega_R}$ is the integral closure of the ideal $\m_R\omega_R$, thinking of the canonical module $\omega_R$ as an ideal of $R$. 
We can show that the canonical module $\omega_S$ of $S$ is in fact $\overline{\m_R\omega_R}\cap\omega_R$.

\begin{theorem}\label{canonical module of S}
	Suppose $S=R[\frac{\C_R}{x_1}]$, then a canonical module $\omega_S$ of $S$ can be chosen to be $\omega_R\cap \overline{\m_R\omega_R}$.
\end{theorem}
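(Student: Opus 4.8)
The plan is to identify $\omega_S$ using the general principle that if $S$ is a one-dimensional reduced ring module-finite over $R$ with the same total quotient ring, and $\omega_R$ is a canonical (fractional) ideal of $R$, then $\Hom_R(S,\omega_R)$ is a canonical module for $S$; so I would first reduce the problem to showing $\Hom_R(S,\omega_R) \cong \omega_R \cap \overline{\m_R\omega_R}$ as $S$-modules (or rather as fractional ideals inside $\quot(R)$). Realizing $\omega_R$ as an ideal of $R$, the module $\Hom_R(S,\omega_R)$ is naturally identified with $\{q \in \quot(R) : qS \subseteq \omega_R\} = \omega_R :_{\quot(R)} S$. Since $S = R[\frac{\C_R}{x_1}]$ is generated over $R$ by $\frac{\C_R}{x_1}$, an element $q$ lies in this colon ideal precisely when $q \in \omega_R$ and $q \cdot \frac{\C_R}{x_1} \subseteq \omega_R$, i.e. $q\C_R \subseteq x_1\omega_R$. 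So the content of the theorem is the identity
\[
\omega_R :_{\quot(R)} \tfrac{\C_R}{x_1} \;=\; \omega_R \cap \overline{\m_R\omega_R},
\]
where I intersect with $\omega_R$ at the end since $q$ must also lie in $\omega_R$.

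Next I would unwind both sides using the description $\C_R/x_1 = (t^{c-a_1})\overline R$ from \Cref{conventionC/x} and the fact that $\m_R\overline R = x_1\overline R$, so that $\overline{\m_R\omega_R} = \overline R \cdot \m_R\omega_R = x_1\overline{R}\,\omega_R$ (an integrally closed ideal in a DVR extension is just the extension). Then $q \in \overline{\m_R\omega_R} \cap \omega_R$ says $q/x_1 \in \overline R\,\omega_R$ and $q \in \omega_R$. On the other side, $q\C_R \subseteq x_1\omega_R$ together with $\C_R = x_1 \cdot \frac{\C_R}{x_1}$ and $\frac{\C_R}{x_1} = (t^{c-a_1})\overline R = \C_{\overline R}$-type ideal means $q \cdot (t^{c-a_1})\overline R \subseteq \omega_R$. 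Passing to valuations: writing $\omega_R$ as a fractional ideal, $\omega_R\overline R = t^{?}\overline R$, and both conditions become inequalities on $\ord(q)$ relative to $\ord$ on $\omega_R$ versus the conductor exponent; the key numerical input is that $\C_R$ is exactly $(t^{c_R})\overline R$ and $c_R - a_1$ is the conductor exponent of $S$ (\Cref{conductorbasicprops}(4)). I expect the two colon/intersection conditions to match up once one uses that multiplication by $\C_R$ inside $\omega_R$ detects membership in the integral closure $\overline{\m_R\omega_R}$ — essentially because $\overline{\m_R\omega_R}$, as a fractional ideal, is characterized by a lower bound on valuation, and multiplying by the conductor ideal of $S$ converts that lower bound into containment in the unextended $\omega_R$.

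Finally I would check the $S$-module structure: the identification $\Hom_R(S,\omega_R) \cong \omega_R :_{\quot(R)} S$ is $S$-linear by construction, and $\omega_R \cap \overline{\m_R\omega_R}$ is visibly an $S$-module since $S$ stabilizes $\overline{\m_R\omega_R}$ (as $S \subseteq \overline R$ and the latter is an $\overline R$-module) and $q \in \omega_R :_{\quot(R)} S$ forces $Sq \subseteq \omega_R$, so $S(\omega_R\cap\overline{\m_R\omega_R}) \subseteq \omega_R\cap\overline{\m_R\omega_R}$. Then I invoke that $\Hom_R(S,\omega_R)$ is a canonical module of $S$ (standard, e.g. via $\omega_S = \Hom_R(S,\omega_R)$ for a finite birational extension, cf. the adjunction $\Hom_S(M,\Hom_R(S,\omega_R)) \cong \Hom_R(M,\omega_R)$ and the defining property of the canonical module). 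The main obstacle I anticipate is the valuation bookkeeping in the middle step — correctly matching "$q\C_R \subseteq x_1\omega_R$" with "$q \in \overline{\m_R\omega_R}$" — since $\omega_R$ is only a fractional ideal and one must be careful that the bound coming from the conductor is sharp, not merely sufficient; the Gorenstein-free generality means one cannot simplify $\omega_R$ to $R$, so the argument has to stay at the level of fractional ideals and their valuation profiles throughout.
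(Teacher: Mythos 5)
Your setup is the same as the paper's: identify $\omega_S$ with $\Hom_R(S,\omega_R)\cong \omega_R:_{\quot(R)}S$, use $S=R+\frac{\C_R}{x_1}$ (via \Cref{conductorbasicprops}(3)) to reduce to the condition ``$q\in\omega_R$ and $q\,\frac{\C_R}{x_1}\subseteq\omega_R$,'' and then compare with $\omega_R\cap\overline{\m_R\omega_R}$. The inclusion $\omega_R\cap x_1\omega_R\overline{R}\subseteq \omega_R:_{\quot(R)}S$ is indeed easy (as you indicate, $y\C_R\subseteq\omega_R$ for $y\in\omega_R$ of minimal order). But the converse inclusion --- the ``sharpness'' you yourself flag as the anticipated obstacle --- is exactly the mathematical content of the theorem, and your proposal does not contain an argument for it. Concretely, you need $\omega_R:_{\quot(R)}\C_R\subseteq \omega_R\overline{R}$, equivalently that the largest $\overline{R}$-submodule of $\omega_R$ is $\omega_R:_{\quot(R)}\overline{R}=y\C_R$ where $(y)$ is a minimal reduction of $\omega_R$. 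This cannot be obtained by ``valuation bookkeeping'': membership in the fractional ideal $\omega_R$ is not a lower-bound-on-$\ord$ condition, and knowing that $\omega_R:_{\quot(R)}\overline{R}$ is of the form $t^{j}\overline{R}$ does not determine $j$. The paper pins $j$ down by a genuine duality computation (following Brown--Herzog): it writes $\overline{R}=R:_{\quot(R)}\C_R$ and $R=\omega_R:_{\quot(R)}\omega_R$, and then uses $\omega_R:_{\quot(R)}(\omega_R:_{\quot(R)}M)=M$ for the maximal Cohen--Macaulay fractional ideal $M=\omega_R\C_R$ to get $\omega_R:_{\quot(R)}\overline{R}=\omega_R\C_R=y\C_R$, whence $\omega_R':\C_R=\overline{R}$ for $\omega_R'=\omega_R/y$. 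None of these inputs ($R:_{\quot(R)}\C_R=\overline{R}$, canonical double-dual for MCM modules) appear in your outline, so the step you label as ``I expect the two conditions to match up'' is a genuine gap, not a routine verification.

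Two smaller points. First, $\overline{\m_R\omega_R}$ (integral closure of an ideal of $R$) is $\m_R\omega_R\overline{R}\cap R$, not $\m_R\omega_R\overline{R}$; your identification is harmless only because you intersect with $\omega_R\subseteq R$ afterwards, but it should be stated correctly. Second, when you reduce $qS\subseteq\omega_R$ to the two conditions $q\in\omega_R$ and $q\,\frac{\C_R}{x_1}\subseteq\omega_R$, you are implicitly using that $\bigl(\frac{\C_R}{x_1}\bigr)^2\subseteq\C_R\subseteq R$ (so that $S=R+\frac{\C_R}{x_1}$ as an $R$-module); cite \Cref{conductorbasicprops}(3) for this rather than only ``generated over $R$ by $\frac{\C_R}{x_1}$,'' which by itself refers to algebra generation and would require handling higher powers.
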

\begin{proof}
The first part of the proof is essentially due to \cite[Lemma 3]{Brown_Herzog}. But we provide the proof here with more details suitable for our purposes. Let $(y)$ be a minimal reduction of a canonical ideal $\omega_R$ of $R$. Thus, we have $\omega_R\overline{R}=y\overline{R}$ and $\omega_R\C_R=y\C_R$.  Let  $\omega'_R=\frac{\omega_R}{y}$. Clearly, $R\subseteq\omega'_R\subseteq\overline{R}$. 

Let $Q=\quot(R)$. Now recall that $\omega_R:_Q \omega_R=R$ (\cite[Proposition 3.3.11(c)]{bruns_herzog_1998} and \cite[Lemma 2.4.2]{SwansonHuneke}). It is also well-known that $R:_Q\C_R=\overline{R}$ (see for instance the proof of \cite[Corollary 2.6]{maitra2020partial}). Combining these facts, we obtain that 
\begin{align*}
	\omega_R':_Q\overline{R}=y^{-1}(\omega_R:_Q\overline{R})&=y^{-1}(\omega_R:_Q(R:_Q\C_R))\\
	&=y^{-1}(\omega_R:_Q((\omega_R:_Q\omega_R):_Q\C_R))\\
	&=y^{-1}(\omega_R:_Q(\omega_R:_Q\omega_R\C_R))\\
	&=y^{-1}\omega_R\C_R
\end{align*} where the last equalities follow, by applying duality to the  maximal Cohen-Macaulay module $\omega_R\C_R$ (\cite[Theorem 3.3.10(c)]{bruns_herzog_1998}). The equality $\omega_R\C_R=y\C_R$ now shows that $\omega_R':_Q\overline{R}=y^{-1}\omega_R\C_R=\C_R$.  Since $\omega'_R,\C_R$ are fractional ideals, we have $\Hom(\overline{R},\omega'_R)=\C_R$ \cite[Lemma 2.4.2]{SwansonHuneke}. This implies $\Hom(\C_R,\omega'_R)=\overline{R}$ as $\overline{R}$ is a maximal Cohen-Macaulay module. Thus $\omega'_R:\C_R=\overline{R}$ (again using \cite[Lemma 2.4.2]{SwansonHuneke}).
	
The canonical module $\omega_S\cong\Hom(S,\omega'_R)\cong \omega'_R:S$. Now let $\alpha\in\quot(R)$ such that $\alpha\in \omega'_R:S$. Then $\alpha S\subseteq \omega'_R$ or $\alpha R\left[\frac{\C_R}{x_1}\right]\subseteq \omega'_R$. Thus 
	\begin{align*}
		\alpha R\left[\frac{\C_R}{x_1}\right]\subseteq \omega'_R&\Leftrightarrow \alpha R\subseteq \omega'_R\text{ and }\alpha(\C_R/x)\subseteq \omega'_R\\
		&\Leftrightarrow\alpha R\subseteq \omega'_R\text{ and }\alpha/x\subseteq \omega'_R:\C_R=\overline{R}\\
		&\Leftrightarrow\alpha R\subseteq \omega'_R\text{ and }\alpha\in x\overline{R}=\m_R\overline{R}
	\end{align*}
	Thus $\omega_S\cong \omega'_R\cap \m_R\overline{R} \cong  \omega'_R y\cap \m_R y\overline{R} =\omega_R \cap \overline{\m_R y} =\omega_R \cap \overline{\m_R\omega_R}$.
\end{proof}
Combining the above theorem with \Cref{canonicalmoduledescription}, it is easy to see that $s=\mu_R\left(\frac{\omega_R}{\overline{\m_R\omega_R}\cap\omega_R}\right)$.

\begin{theorem}\label{kernelofnew_proof}
	Let  $\ds \mathfrak{C}_R\subseteq \m_R^2$. Set $P = \k\ps{X_1,...,X_n}$. Choose $S,b_1,\dots,b_s$ as in \Cref{conventionC/x}. Then there exists a presentation of $S$ as follows: $$S=R\Big[\frac{\mathfrak{C}_R}{x_1}\Big]=\frac{\k\ps{X_1,\dots, X_n,T_1,\dots, T_s}}{\ker\Phi + \(X_iT_j-g_{ij}(X_1,\dots,X_n),T_kT_l-h_{kl}(X_1,\dots, X_n)\)_{{1\leq i\leq n, 1\leq j\leq s \atop 1\leq k\leq l\leq s}}}$$ where 
		 $g_{ij}(X_1,\dots,X_n), h_{kl}(X_1,\dots, X_n)\in (X_1,\dots,X_n)^2P$ for all $i,j,k,l$. \\Moreover, $g_{ij}(x_1,\dots,x_n), h_{kl}(x_1,\dots, x_n)\in\C_R$.
	
\end{theorem}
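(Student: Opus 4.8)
The plan is to exhibit the presentation explicitly by enlarging the polynomial ring $P = \k\ps{X_1,\dots,X_n}$ to $P' = \k\ps{X_1,\dots,X_n,T_1,\dots,T_s}$ and extending the map $\Phi$ to a surjection $\Psi \colon P' \twoheadrightarrow S$ via $\Psi(X_i) = x_i$ and $\Psi(T_j) = t^{b_j}/x_1$, where $b_1,\dots,b_s$ are the powers of $t$ from \Cref{conventionC/x} (so $t^{b_j} \in \C_R$ and $t^{b_j}/x_1 \in \frac{\C_R}{x_1}$, hence lies in $S$ by construction). Surjectivity of $\Psi$ is immediate since $S$ is generated over $R$ by $\frac{\C_R}{x_1}$, which by \Cref{conventionC/x} is generated as a $\k$-vector space modulo $R$ by the $t^{b_j}/x_1$. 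The substance of the argument is to identify $\ker\Psi$ with the ideal $J := \ker\Phi \cdot P' + \big(X_iT_j - g_{ij},\, T_kT_l - h_{kl}\big)$ for suitable $g_{ij}, h_{kl} \in (X_1,\dots,X_n)^2 P$.

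First I would \emph{produce} the relations. For each pair $(i,j)$, the element $x_i \cdot (t^{b_j}/x_1) = x_i t^{b_j}/x_1$ lies in $\C_R \subseteq \m_R^2$ by \Cref{conductorbasicprops}(2) (indeed $\m_R \cdot \frac{\C_R}{x_1} \subseteq \C_R$), so we may write it as $\Phi(g_{ij})$ for some $g_{ij} \in \m_P^2 = (X_1,\dots,X_n)^2 P$; this gives $X_iT_j - g_{ij} \in \ker\Psi$ and simultaneously records that $g_{ij}(x_1,\dots,x_n) = x_i t^{b_j}/x_1 \in \C_R$. Similarly, $(t^{b_k}/x_1)(t^{b_l}/x_1) \in \C_R \cdot \frac{\C_R}{x_1}$? — more directly, by \Cref{conductorbasicprops}(3), the product of two elements of $\frac{\C_R}{x_1}$ lies in $\C_R \subseteq \m_R^2$, so it equals $\Phi(h_{kl})$ for some $h_{kl} \in (X_1,\dots,X_n)^2 P$, giving $T_kT_l - h_{kl} \in \ker\Psi$ and $h_{kl}(x_1,\dots,x_n) \in \C_R$. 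Thus $J \subseteq \ker\Psi$, and the "Moreover" clause will fall out of this construction for free.

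The main obstacle is the reverse inclusion $\ker\Psi \subseteq J$, i.e.\ that these relations \emph{suffice}. The key observation is that modulo the relations $X_iT_j - g_{ij}$ and $T_kT_l - h_{kl}$, every element of $P'$ can be rewritten in the \emph{normal form} $p_0(X_1,\dots,X_n) + \sum_{j=1}^s c_j T_j$ with $p_0 \in P$ and $c_j \in \k$: any monomial involving a product $X_i T_j$ or $T_k T_l$ can be reduced back into $P \oplus \bigoplus_j \k T_j$, and one checks the rewriting terminates (e.g.\ by an order/degree argument using that $g_{ij}, h_{kl} \in \m_P^2$ and tracking the $t$-valuation under $\Psi$). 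Hence $P'/J$ is generated as an $R$-module by $1, \bar T_1,\dots,\bar T_s$, so $P'/J \twoheadrightarrow S$ is a surjection of modules each generated (over $R$) in the "same shape"; to conclude it is an isomorphism it suffices to show $P'/J$ has the right size, namely that $1,\bar T_1,\dots,\bar T_s$ are $\k$-linearly independent modulo $R$ in $P'/J$. Suppose $\sum_j c_j \bar T_j \in R \cdot 1$ inside $P'/J$, say $\sum_j c_j \bar T_j = \bar p$ with $p \in P$; applying $\Psi$ gives $\sum_j c_j t^{b_j}/x_1 = \Phi(p) \in R$, and then by the argument in \Cref{conductorbasicprops}(5) (the map $\eta$ there is injective precisely because $b_1,\dots,b_s$ are chosen \emph{not} in the value semigroup of $R$), we get all $c_j = 0$. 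This pins down $P'/J \cong S$ and completes the proof; I would be careful to present the normal-form/termination step cleanly, as that is where the genuine content lies, while the valuation-semigroup independence is already essentially isolated in \Cref{conductorbasicprops}(5).
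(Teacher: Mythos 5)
Your proposal is correct and follows essentially the same route as the paper's proof: define $\Psi$ on $\k\ps{X_1,\dots,X_n,T_1,\dots,T_s}$, manufacture the relations $X_iT_j-g_{ij}$ and $T_kT_l-h_{kl}$ from \Cref{conductorbasicprops}(2),(3) together with $\C_R\subseteq\m_R^2$ (which also yields the ``moreover'' clause), and prove sufficiency by reducing any element modulo these relations to the normal form $p'(X_1,\dots,X_n)+\sum_j c_jT_j$ with $c_j\in\k$ and using that the exponents $b_j$ lie outside the value semigroup of $R$, exactly as in the paper (your extra care about termination of the rewriting is a point the paper leaves implicit). The only discrepancy is notational: in \Cref{conventionC/x} the $b_j$ satisfy $c_R-a_1\leq b_j\leq c_R-1$ and index generators of $\frac{\C_R}{x_1}$ directly, so the paper sets $\Psi(T_j)=t^{b_j}$ rather than $t^{b_j}/x_1$ with $t^{b_j}\in\C_R$; this reindexing does not affect the argument.
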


\begin{proof}
	Define  $\Psi : \k\ps{X_1,\dots,X_n,T_1,\dots,T_s}\twoheadrightarrow S$ where $$\Psi(X_i)=\alpha_i t^{a_i},1\leq i\leq n, \qquad\Psi(T_j)=t^{b_j}, 1\leq j\leq s.$$ Note that $c_R-a_1\leq b_1,\dots,b_s\leq c_R-1$.  By same arguments as in  \Cref{conductorbasicprops}$(3)$ and also by reading off valuations, we see that the images of $\ds X_iT_j$ and $\ds T_kT_l$ are all in $\mathfrak{C}_R$. Hence there exists $g_{ij}(X_1,\dots,X_n)$, $h_{kl}(X_1,\dots,X_n)$ such that $$X_iT_j-g_{ij}(X_1,\dots,X_n),T_kT_l-h_{kl}(X_1,\dots, X_n)\in \ker \Psi.$$  Moreover,
	since $\C_R\subseteq \m_R^2$, we can choose $g_{ij}(X_1,\dots,X_n), h_{kl}(X_1,\dots, X_n)\in (X_1,\dots,X_n)^2P$ for all $i,j,k,l$ as well as $g_{ij}(x_1,\dots,x_n), h_{kl}(x_1,\dots, x_n)\in\C_R$. Set $$\ds J=\(X_iT_j-g_{ij}(X_1,\dots,X_n),T_kT_l-h_{kl}(X_1,\dots, X_n)\)_{{1\leq i\leq n, 1\leq j\leq s \atop 1\leq k\leq l\leq s}}.$$ 
	By construction, elements in $J$ do not have any purely linear terms in any $X_i$. By the above discussion, $J\subset \ker \Psi$.
	
	Conversely, let $p(X_1,\dots,X_n,T_1,\dots T_s)\in\ker\Psi$. Modulo the ideal $J$, we can write $$p(X_1,\dots,X_n,T_1,\dots, T_s)\equiv p'(X_1,\dots,X_n) +\sum_{i=1}^s\beta_i T_i$$ where $\beta_i \in \k$ and $p'(X_1,\dots,X_n)\in P$. Since $J\subset\ker\Psi$, we have $p'(X_1,\dots,X_n)+\sum_{i=1}^s\beta_i T_i\in\ker\Psi$. Thus, $\ds \sum_{i=1}^s\beta_i t^{b_i}=\Psi(\sum_{i=1}^s\beta_i T_i)=\Psi(- p'(X_1,\dots,X_n))\in R$. By the choice of $b_i$'s, we immediately obtain that $\sum_{i=1}^s\beta_it^{b_i}=0$. Thus $\beta_i=0$ for all $i$ and hence $p'(X_1,\dots,X_n)\in\ker\Phi$. This shows that $\ker\Psi=\ker\Phi+J$.
	\end{proof}
	
\begin{remark}\label{defining matrix of S}
	Using the defining ideal of the $S$ in the previous theorem gives the following presentation of $\Omega_S$: 
	\begin{equation}
	\scalemath{0.5}{
		\begin{bmatrix}
		~&T_1-\partial g_{11}/\partial x_1&\cdots& \partial g_{n1}/\partial x_1&T_2-\partial g_{12}/\partial x_1&\cdots& \partial g_{n2}/\partial x_1 & \cdots & T_n-\partial g_{1n}/\partial x_1&\cdots& \partial g_{nn}/\partial x_1&\partial h_{11}/\partial x_1&\partial h_{12}/\partial x_1&\cdots &\partial h_{1n}/\partial x_1&\cdots &\partial h_{nn}/\partial x_1\\
		\partial f_i/\partial x_j &\vdots &\vdots &\vdots&\vdots&\vdots&\vdots&\vdots&\vdots&\vdots&\vdots&\vdots&\vdots&\vdots&\vdots&\vdots&\vdots\\
		~&\partial g_{11}/\partial x_n & \cdots &T_1-\partial g_{n1}/\partial x_n&\partial g_{12}/\partial x_n & \cdots &T_2-\partial g_{n2}/\partial x_n & \cdots &\partial g_{1n}/\partial x_n & \cdots &T_n-\partial g_{nn}/\partial x_n&\partial h_{11}/\partial x_1&\partial h_{12}/\partial x_1&\cdots&\partial h_{1n}/\partial x_n&\cdots &\partial h_{nn}/\partial x_n\\
		~& x_1 & \cdots & x_n& 0 & \cdots & 0 &\cdots  & 0 & \cdots & 0 &2T_1 & T_2 &\cdots &0 &\cdots &0\\
		~& 0 & \cdots & 0& x_1 & \cdots & x_n &\cdots  & 0 & \cdots & 0 &0 & T_1 &\cdots &0 &\cdots &0\\
		~& \vdots & \cdots & \vdots& \vdots & \cdots & \vdots &\cdots  & \vdots & \cdots & \vdots &\vdots & \vdots &\cdots &\vdots &\cdots &\vdots\\		
		~& 0 & \cdots & 0& 0 & \cdots & 0 &\cdots  & x_1 & \cdots & x_n &0 & 0 &\cdots &0 &\cdots &2T_n\\
		
		\end{bmatrix}
	}
	\end{equation}
\end{remark}

By abuse of notation, we denote the images of $T_i$ in $S$, by $T_i$ again. Thus $S$ is also the same as $\k\ps{x_1,\dots,x_n,T_1,\dots,T_s}$.  In $S=R\left[\frac{\mathfrak{C}_R}{x_1}\right]=\k\ps{t^{a_1},\dots,\alpha_nt^{a_n},t^{b_1},\dots,t^{b_s} }$, the torsion submodule $\tau(\Omega_S)$ is nonzero (\Cref{two monomials}). 

In $\Omega_S$, notice that $\Psi(T_j)=t^{b_j}$ (refer to \Cref{kernelofnew_proof} for definition of $\Psi$). Clearly, $b_iT_idT_j-b_jT_jdT_i\in \tau(\Omega_S), 1\leq i<j\leq s$ as $b_it^{b_j}dt^{b_i}-b_it^{b_i}dt^{b_j}=0$ in $\Omega_{\overline{R}}$. This torsion element is nonzero due to \cite[Proposition 2.6]{ABC1}. Thus $\Omega_S$ always has nonzero torsion elements of the form $\gamma_{ij}=b_iT_idT_j-b_jT_jdT_i, 1\leq i<j\leq s$ (which are ${s\choose 2}$ in number). So the torsion submodule $\tau(\Omega_S)$ has at least ${s\choose 2}$ elements. Moreover, all these elements are $\k$-linearly independent as follows again from \cite[Proposition 2.6]{ABC1}.

\begin{lemma}\label{one more torsion lemma}

	Let  $\mathfrak{C}_R\subseteq \m_R^2$ and construct $S=R\left[\frac{\mathfrak{C}_R}{x_1}\right]=R[T_1,\dots,T_s]$ as in \Cref{kernelofnew_proof}. Let $\tau(\Omega_R),\tau(\Omega_S)$ represent the torsion submodules of $\Omega_R,\Omega_S$ respectively and $\gamma_{ij}=b_iT_idT_j-b_jT_jdT_i, 1\leq i<j\leq s$. Consider an element $\tau=\sum_i r_idx_i+\sum_j r_j'dT_j$ in $\Omega_S$ where $r_j',1\leq j\leq s$ are not units in $S$. Then there exist $c_{ij}\in \k$ such that $$\tau-\sum_{i,j}c_{ij}\gamma_{ij}=\sum_i r_i''dx_i\in   \Omega_S,$$ where $r_i''\in S$.   In particular, if $\tau\in \tau(\Omega_{S})$ then $\sum_i r_i''dx_i\in   \tau(\Omega_S)$.
\end{lemma}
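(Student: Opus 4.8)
The plan is to use the explicit presentation of $\Omega_S$ from \Cref{defining matrix of S} to massage an arbitrary element $\tau = \sum_i r_i\,dx_i + \sum_j r_j'\,dT_j$ into one with no $dT_j$ components, at the cost of subtracting off a $\k$-linear combination of the known torsion elements $\gamma_{ij}$. First I would write each coefficient $r_j' \in S$ in terms of the monomial description $S = \k\ps{x_1,\dots,x_n,T_1,\dots,T_s}$: since $r_j'$ is not a unit, it lies in $\m_S = (x_1,\dots,x_n,T_1,\dots,T_s)$, so $r_j' = \lambda_j^{(0)} + (\text{something in }\m_R) + \sum_k \lambda_{jk} T_k$ where the constant term vanishes and $\lambda_{jk}\in\k$. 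Actually the cleanest bookkeeping is: modulo terms that visibly lift to the $x$-part, $r_j'\,dT_j$ reduces to $\sum_k \lambda_{jk} T_k\,dT_j$ plus an $R$-linear combination of the $dx_i$. The point of \Cref{conductorbasicprops}(2)--(3) and \Cref{kernelofnew_proof} is precisely that $x_i T_j$ and $T_k T_l$ are in $\C_R$, hence expressible in the $x$'s; differentiating the relations $X_iT_j - g_{ij}$ and $T_kT_l - h_{kl}$ shows that in $\Omega_S$ one has $x_i\,dT_j \equiv (\text{stuff})\,dx$ and $T_k\,dT_l + T_l\,dT_k \equiv (\text{stuff})\,dx$. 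So any product $x_i\,dT_j$ can be traded for $dx$-terms, and any \emph{symmetric} combination $T_k\,dT_l + T_l\,dT_k$ can likewise be absorbed.

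The key algebraic step is then: the space of "$dT$-type" expressions $\sum_{j,k}\lambda_{jk}T_k\,dT_j$ modulo the $dx$-part is spanned, via the symmetric relations $T_k\,dT_l+T_l\,dT_k \equiv (dx\text{-terms})$, by the \emph{antisymmetric} combinations $T_k\,dT_l - T_l\,dT_k$ for $k<l$, together with the diagonal terms $T_k\,dT_k$; but $2T_k\,dT_k \equiv (dx\text{-terms})$ from the relation $T_k^2 - h_{kk}$, so (char $0$) the diagonal terms also disappear. Hence modulo $\Omega_S \cdot dx := \sum_i S\,dx_i$, the $dT$-contribution of $\tau$ is a $\k$-combination of the elements $T_k\,dT_l - T_l\,dT_k$ with $k<l$; rescaling by the nonzero scalars $b_k, b_l$ these are exactly the $\gamma_{kl}$ up to $dx$-terms, so choosing $c_{kl}\in\k$ appropriately we arrange $\tau - \sum_{k<l} c_{kl}\gamma_{kl} \in \sum_i S\,dx_i$, i.e. it equals $\sum_i r_i''\,dx_i$ for some $r_i''\in S$. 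The final sentence is immediate: $\tau(\Omega_S)$ is an $S$-submodule and each $\gamma_{ij}\in\tau(\Omega_S)$, so if $\tau\in\tau(\Omega_S)$ then $\tau - \sum c_{ij}\gamma_{ij} = \sum_i r_i''\,dx_i$ is again in $\tau(\Omega_S)$.

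I expect the main obstacle to be the careful bookkeeping that ensures the "trading" of $x_i\,dT_j$ and $T_kT_l$-symmetric terms for $dx$-terms does not reintroduce a stray $dT$ component — one has to iterate, since $g_{ij}$ and $h_{kl}$ lie in $(X_1,\dots,X_n)^2P$ but their partial derivatives with respect to $x_i$ are genuine $dx$-expressions with coefficients in $\m_R$, so no new $T$'s appear; the key structural fact making the iteration terminate is exactly that $g_{ij}, h_{kl}$ involve \emph{only} the $X$-variables, which is the content of \Cref{kernelofnew_proof}. A secondary point to be careful about is that the hypothesis "$r_j'$ not a unit" is used only to kill the constant term $\lambda_j^{(0)}dT_j$, which could not otherwise be absorbed; everything else goes through for arbitrary coefficients. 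Once this reduction is in place, no estimate on $s(R)$ or on the embedding dimension is needed — the lemma is purely formal, extracting the "expected" $\binom{s}{2}$-dimensional chunk of torsion and showing any further torsion can be taken to live in the $dx$-part.
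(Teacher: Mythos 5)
Your proposal is correct and follows essentially the same route as the paper's proof: decompose each $r_j'$ as an element of $\m_R$ plus a $\k$-linear form in the $T_k$ (using that $x_iT_j, T_kT_l\in\C_R$, i.e.\ $\m_S^2=\m_R^2$), absorb the $x_i\,dT_j$ and symmetric $T$-terms into the $dx$-part via the Jacobian relations from \Cref{kernelofnew_proof}, and then cancel the remaining off-diagonal linear terms with suitable multiples of $\gamma_{kl}$, which works since $b_k+b_l\neq 0$ in characteristic $0$. Your explicit treatment of the diagonal terms $T_k\,dT_k$ via the column of $T_k^2-h_{kk}$ is a small point the paper leaves implicit, but the argument is the same in substance.
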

\begin{proof}
	Since $\m_S^2 = \m_R^2 + \m_R(\frac{\mathfrak{C}_R}{x_1}) + (\frac{\mathfrak{C}_R}{x_1})^2 = \m_R^2$, and since $S/\m_S = R/\m_R$, we can represent the
	entries $r_j', 1\leq j\leq s$ by elements of $R$ plus linear forms over $\k$ in $T_1,...,T_s$. Let $r_j'=p_j(x_1,\dots,x_n)+\sum_{l}k_{jl}T_{l}, 1\leq j\leq s$ where $k_{jl}\in\k$. Clearly, using the Jacobian matrix (as in \Cref{defining matrix of S}) we can rewrite $\tau$ as $\sum_i u_idx_i+\sum_j u_j'dT_j$ where $u_j'=\sum_{l}k'_{jl}T_l$ where $k_{jl}'\in\k$. We wish to eliminate the variables $T_1,\dots,T_s$ from $u_j'$. Suppose $k_{jl}'\neq 0$.
	
	The column in the Jacobian matrix corresponding to the defining equation $T_lT_j-h_{lj}(X_1,\dots,X_n)$ of $S$ (refer to \Cref{kernelofnew_proof}) is of the form $\theta_{lj}=T_ldT_j+T_jdT_l-\sum_t\frac{\partial h_{lj}}{\partial X_t}dX_t$. Notice that $\gamma_{lj}+b_j\theta_{lj}=(b_l+b_j)T_ldT_j-\sum_tb_l\frac{\partial h_{lj}}{\partial X_t}dX_t$. Since $\theta_{lj}=0$ in $\Omega_S$,  $\gamma_{lj}$ can be rewritten as $(b_l+b_j)T_ldT_j-\sum_tb_l\frac{\partial h_{lj}}{\partial x_t}dx_t$ in $\Omega_S$. Now $\tau-\frac{k'_{jl}}{(b_l+b_j)}\gamma_{lj}$ will not have a $T_l$ term in the $(n+j)$-th row (corresponding to $dT_j$). Since $j,l$ were arbitrary, a $\k$-linear combination of $\tau$ and $\gamma_{lj}$ will eliminate all the variables $T_1,\dots,T_s$ from $u_j'$. Since $j$ was arbitrary, we can eliminate the variables $T_1,\dots,T_s$ from $u_1',\dots,u_s'$ as well, to get the result.
\end{proof}

The following theorem is an important technical result of this article. This will serve as the main tool that will help us in pulling back nonzero torsion elements from $\Omega_S$ to $\Omega_{ R}$, as we shall see. We shall write $\ell(M)$ to denote the length of an $R$-module $M$. 

\begin{theorem}\label{thm on one more torsion}
	Let $\mathfrak{C}_R\subseteq \m_R^2$ and construct $S=R\left[\frac{\mathfrak{C}_R}{x_1}\right]=R[T_1,\dots,T_s]$ as in \Cref{kernelofnew_proof}. Let $\tau(\Omega_R),\tau(\Omega_S)$ represent the torsion submodules of $\Omega_R,\Omega_S$ respectively. If $\ell(\tau(\Omega_S))\geq ns+{s \choose 2}+1$ and all these torsion elements have non-units in the last $s$ rows (corresponding to $dT_1,\dots,dT_s$), then a $\k$-linear combination of these torsion elements can be pulled back to a nonzero torsion element in $\tau(\Omega_R)$. In particular, $\tau(\Omega_R)\neq 0$.
\end{theorem}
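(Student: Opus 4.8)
The plan is to exploit the fact that $\Omega_S$ is built from $\Omega_R$ by adjoining $s$ new generators $dT_1,\dots,dT_s$, together with the new relation columns coming from the equations $X_iT_j-g_{ij}$ and $T_kT_l-h_{kl}$ of \Cref{kernelofnew_proof}. Concretely, let me write an element of $\tau(\Omega_S)$ in the basis $dx_1,\dots,dx_n,dT_1,\dots,dT_s$. First I would apply \Cref{one more torsion lemma}: since by hypothesis every torsion element has non-units in the last $s$ rows, each such $\tau$ can, after subtracting a $\k$-linear combination of the canonical torsion elements $\gamma_{ij}$ (of which there are $\binom{s}{2}$), be rewritten in $\Omega_S$ in the form $\sum_i r_i'' dx_i$ with no $dT_j$ terms at all. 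So modulo the span of the $\gamma_{ij}$'s, every torsion class in $\Omega_S$ is represented by a vector supported only on the first $n$ coordinates.

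Next I would set up the comparison map. There is a natural surjection $\Omega_S \to \Omega_R \otimes_R S$ obtained by killing $dT_1,\dots,dT_s$ (equivalently, $\Omega_R\otimes_R S$ is the cokernel of just the Jacobian block $[\partial f_i/\partial x_j]$ over $S$, while $\Omega_S$ has the extra columns). Given $\sum_i r_i'' dx_i \in \tau(\Omega_S)$, its image lies in $\tau(\Omega_R\otimes_R S)$, and since the torsion computation is governed by the single equation $\sum_i r_i'' \frac{dx_i}{dt}=0$ in $\overline R$ — which does not mention the $T_j$ at all — the tuple $(r_1'',\dots,r_n'')$ with entries in $S$ actually satisfies exactly the relation defining $\tau(\Omega_R\otimes_R S)$. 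The point is then to descend: I want genuinely $R$-coefficients, not $S$-coefficients. Here I would use $S/R\cong\k^s$ again: writing each $r_i'' = \rho_i + (\text{$\k$-combination of } T_1,\dots,T_s)$ with $\rho_i\in R$, the "$T$-part" of the vector lies in a $\k$-vector space of dimension at most $ns$ (coefficients of $T_1,\dots,T_s$ in each of $n$ slots). So the assignment $\tau \mapsto (\text{coefficients of the }\gamma_{ij}\text{'s used})\oplus(\text{$T$-part of the reduced vector})$ is a $\k$-linear map from $\tau(\Omega_S)$ into a space of dimension $\binom{s}{2}+ns$.

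Now the counting hypothesis $\ell(\tau(\Omega_S))\ge ns+\binom{s}{2}+1$ forces this $\k$-linear map to have nontrivial kernel. An element $\tau$ in that kernel uses none of the $\gamma_{ij}$'s essentially and, after the reduction of \Cref{one more torsion lemma}, is represented by $\sum_i \rho_i dx_i$ with all $\rho_i\in R$; moreover $\sum_i \rho_i\frac{dx_i}{dt}=0$, so this is a torsion element of $\Omega_R$ in the honest sense. The final thing to check is that it is nonzero in $\Omega_R$: it is nonzero in $\Omega_S$ by construction (it is a nonzero class since the reduction map has the stated kernel only on the specified finite-dimensional target, and $\tau\ne 0$ in $\Omega_S$), and since the map $\Omega_R\to\Omega_S$ sending $dx_i\mapsto dx_i$ is, on the relevant torsion, injective — the $R$-linear relations among $dx_1,\dots,dx_n$ are the same whether computed in $\Omega_R$ or pushed into $\Omega_S$, because $S$ is faithfully flat-ish over $R$ only in the weak sense we need, namely $R\hookrightarrow S$ and the column space of the $f_i$-Jacobian over $S$ meets $R^n$ in exactly the column space over $R$ — we conclude $\sum_i\rho_i dx_i\ne 0$ in $\Omega_R$, hence $\tau(\Omega_R)\ne 0$.

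The main obstacle I anticipate is precisely this last injectivity/nonvanishing step: one must argue carefully that a vector $(\rho_1,\dots,\rho_n)\in R^n$ which becomes a boundary in $\Omega_S$ (i.e. lies in the $S$-column space of the full Jacobian of the $f_i$, $g_{ij}$, $h_{kl}$) must already lie in the $R$-column space of the Jacobian of the $f_i$ alone. This requires tracking how the extra relation columns (which involve the $T_j$'s linearly, per \Cref{defining matrix of S}) can contribute to a purely-$R$ vector in the top $n$ rows, and showing that any such contribution is already accounted for by the $\gamma_{ij}$-reduction and the $f_i$-columns; the bookkeeping with the $T_j$-degrees in \Cref{defining matrix of S} is where the real work lies, and it is where the hypothesis "$\C_R\subseteq\m_R^2$'' (ensuring $g_{ij},h_{kl}\in\m_P^2$, so no stray linear terms) gets used.
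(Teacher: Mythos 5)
Your construction and dimension count are essentially the paper's own proof: reduce the chosen $ns+\binom{s}{2}+1$ independent torsion elements modulo the $\gamma_{ij}$ via \Cref{one more torsion lemma} (legitimate here because the hypothesis puts non-units in the last $s$ rows), note that after reduction each entry is an element of $R$ plus a $\k$-linear form in $T_1,\dots,T_s$ (since $S/R\cong\k^s$ is spanned by the images of the $T_j$), and use the fact that the $\gamma$-coefficients together with the $T$-parts live in a $\k$-space of dimension $\binom{s}{2}+ns$ to force a nontrivial $\k$-combination with all of these vanishing. The paper organizes the same count as an $ns\times(ns+1)$ matrix of $T$-coefficients and column operations; that formulation also settles the mild well-definedness issue in your ``reduction map'' (fix a representative and a reduction for each of the chosen independent elements and extend $\k$-linearly), and, exactly as you argue, the resulting vector has entries in $R$, zero last $s$ rows, represents the class $\tau\neq 0$ in $\Omega_S$, and is a syzygy of $\frac{dx_1}{dt},\dots,\frac{dx_n}{dt}$, hence defines an element of $\tau(\Omega_R)$.

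Where you go astray is the final nonvanishing step: the ``main obstacle'' you anticipate is not an obstacle, because you have the implication backwards. Since your element $\sum_i\rho_i\,dx_i$ is already known to be nonzero in $\Omega_S$, all that is needed is the implication ``zero in $\Omega_R$ $\Rightarrow$ zero in $\Omega_S$,'' i.e.\ that the natural map $\Omega_R\to\Omega_S$, $dx_i\mapsto dx_i$, is well defined. This is automatic: the presentation of $\Omega_S$ in \Cref{defining matrix of S} contains the Jacobian columns of the $f_i$ (with zeros in the rows for $dT_1,\dots,dT_s$), so any $R$-combination of Jacobian columns of the $f_i$ is in particular an $S$-combination of columns of the full presentation matrix. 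Nonvanishing in $\Omega_S$ therefore immediately gives nonvanishing in $\Omega_R$, which is precisely the paper's one-line conclusion. The statement you flag as ``where the real work lies'' --- that an $R^n$-vector lying in the $S$-column space of the full Jacobian of the $f_i,g_{ij},h_{kl}$ must already lie in the $R$-column space of the $f_i$-Jacobian --- is the converse (injectivity of $\Omega_R\to\Omega_S$ on these classes); it is not needed, is not claimed by the paper, and there is no reason to expect it in general, so the $T_j$-bookkeeping you defer would be wasted effort. With that correction your argument is complete and coincides with the paper's.
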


\begin{remark} Although the hypothesis of at least $ns+{s\choose 2}+1$ $\k$-linear torsion elements sounds very strong, in fact it is not.  We can prove that there are always at least
$ns+{s\choose 2}$  $\k$-linearly independent torsion elements in $\Omega_S$, so this theorem requires only one more new torsion element.  Of course, we are only searching for one
nonzero torsion element in $\Omega_R$ in any case, but the point is that the search for one extra torsion element is often better in $S$ than in $R$, since $S$ is usually
a simpler ring.

The number $ns+\binom{s}{2}$ comes from the following observation: suppose some $\alpha_i=1$. Then we can look at the torsion element $a_ix_idT_j-b_jT_jdx_i$ for $1\leq j\leq s$. We can apply the `monomialization technique' as in the proof of \Cref{valuation more than conductor}, to generate $ns$ such torsion elements; we always have $\binom{s}{2}$ torsion elements coming from the variables $T_j$'s. Finally we  can use \cite[Proposition 2.6]{ABC1} to prove $\k$-linear independence among these. We defer the technical details to a future article.
\end{remark}

\begin{proof}[Proof of \Cref{thm on one more torsion}]
%
%
Since $\ell(\tau(\Omega_S))\geq ns+{s \choose 2}+1$, let $\tau_1,\dots,\tau_{ns+{s\choose 2}+1}$ denote these $\k$-linearly independent torsion elements with non units in the last $s$ rows (corresponding to $dT_1,\dots,dT_s$). By \Cref{one more torsion lemma}, we can use $\gamma_{ij}$ to rewrite $\tau_1,\dots,\tau_{ns+{s\choose 2}+1}$ as $\tau_1',\dots,\tau_{ns+{s\choose 2}+1}'$ which have zeroes in the last $s$ rows. Let $V=\left\{\tau_1',\dots,\tau_{ns+{s\choose 2}+1}'\right\}$ and \\$V'=V\bigcup\left\{\gamma_{ij}~|~1\leq i<j\leq s\right\}$. Thus we have 
\begin{align}\label{one more torsion eq1}
\dim_\k\langle V'\rangle\leq \dim_\k \langle V\rangle +{s\choose 2}	
\end{align}
where $\langle \cdot\rangle$ denotes the $\k$-linear span. Notice that $\{\tau_1,\dots,\tau_{ns+{s\choose 2}+1}\}\subseteq \langle V'\rangle$ and hence
\begin{align}\label{one more torsion eq2}
ns+{s\choose 2}+1\leq \dim_\k \langle V'\rangle.
\end{align}
Combining \eqref{one more torsion eq1},\eqref{one more torsion eq2}, we have $\dim_\k \langle V\rangle\geq ns+1$. Thus there are at least $ns+1$ $\k$-linearly independent elements $\rho_i,1\leq i\leq ns+1$ all having the last $s$ rows (corresponding to $dT_1,\dots,dT_s$) consisting of zeroes.


 Let $B=[\rho_1~\ldots~\rho_{ns+1}]$ be the $(n+s)\times (ns+1)$ matrix obtained by concatenating the column vectors $\rho_i,1\leq i\leq ns+1$.
Since the last $s$ rows of $B$ are zero, it is effectively an $n\times (ns+1)$ matrix.
	
	Our goal is to pullback a $\k-$linear combination of the columns of $B$ to $\Omega_R$. Passing to the vector
	space $S/R$ gives an $n\times (ns+1)$ matrix of linear forms in $T_1,...,T_s$.  Writing the coefficients of each linear form as its own $s\times 1$ column, we obtain an
	$ns\times (ns+1)$ matrix over $\k$.  By elementary column operations over $\k$ it follows that we can obtain a column of zeroes.  Performing the same operations on the matrix $B$ gives
	us a nonzero torsion element whose last $s$ rows are zeroes, and whose entries are in $R$.  These necessarily also represent a torsion element in $\Omega_R$, since they
	are a syzygy of $\frac{dx_1}{dt},\dots,\frac{dx_n}{dt}$.  If this torsion element were zero in $\Omega_R$, it would also be zero in $\Omega_S$, since the presentation of $\Omega_S$ contains the
	Jacobian matrix associated to $R$. \end{proof}

As an immediate application, we generalize a result of Scheja (\cite{scheja1970differentialmoduln}), proved by many researchers, who proved Berger's conjecture in the case $R$ is
quasi-homogeneous. If $R$ is quasi-homogeneous, so too is $S$, so the next result is strictly stronger:

\begin{theorem}\label{quasihomogeneous}
	Let $R=\k\ps{\alpha_1t^{a_1},\dots,\alpha_nt^{a_n}}$ with conductor $\mathfrak{C}_R$. Construct $S=R\left[\frac{\mathfrak{C}_R}{x_1}\right]=R[T_1,\dots,T_s]$ as in \Cref{kernelofnew_proof}. If $S$ is quasi-homogeneous, then $\tau(\Omega_R)$ is nonzero. 
\end{theorem}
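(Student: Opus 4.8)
The plan is to feed \Cref{thm on one more torsion} enough torsion in $\Omega_S$ of the prescribed shape, exploiting that a quasi-homogeneous one-dimensional complete local $\k$-domain (with $\k$ algebraically closed) is analytically a numerical semigroup ring. Concretely: since $S$ is quasi-homogeneous, there is a uniformizer $u$ of $\overline S=\overline R$ and integers $c_1<\cdots<c_N$ generating a numerical semigroup, with $S=\k\ps{u^{c_1},\dots,u^{c_N}}$ and $N=\edim S=n+s$. Write $y_\ell=u^{c_\ell}\in S$, so that $y_1,\dots,y_N$ is a minimal generating set of $\m_S$ consisting of monomials in the single uniformizer $u$. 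Recall also, as in the first lines of the proof of \Cref{one more torsion lemma}, that $\m_S^2=\m_R^2$ and $S/\m_S=\k$; hence $\Omega_S$ has $N$ minimal generators and $\Omega_S\otimes_S\k\cong\k^{\,N}$.

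First I would write down, for every pair $1\le i<j\le N$, the element $\rho_{ij}:=c_iy_idy_j-c_jy_jdy_i\in\Omega_S$. Each $\rho_{ij}$ is torsion: under $\Omega_S\to\Omega_{\overline S}$ one has $dy_\ell\mapsto c_\ell u^{c_\ell-1}\,du$, so $\rho_{ij}\mapsto(c_ic_j-c_jc_i)u^{c_i+c_j-1}\,du=0$. Next I would note $\rho_{ij}\in\m_S\Omega_S$ (because $y_i,y_j\in\m_S$ and $\Char\k=0$); since an element of $\m_S\Omega_S$ is exactly one dying in $\Omega_S\otimes_S\k$, when $\rho_{ij}$ is expressed in the presentation of \Cref{defining matrix of S} with basis $dx_1,\dots,dx_n,dT_1,\dots,dT_s$, every coordinate lies in $\m_S$; in particular the last $s$ coordinates (those along $dT_1,\dots,dT_s$) are non-units, which is precisely the hypothesis demanded in \Cref{thm on one more torsion}. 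Finally I would check $\k$-linear independence of $\{\rho_{ij}\}_{1\le i<j\le N}$ by reducing modulo $\m_S^2$, exactly as in the discussion preceding \Cref{one more torsion lemma}: since $y_iy_j\in\m_S^2$ we get $\overline{y_i}\,d\overline{y_j}=-\overline{y_j}\,d\overline{y_i}$ in $\Omega_{S/\m_S^2}$, so $\overline{\rho_{ij}}=(c_i+c_j)\,\overline{y_i}\,d\overline{y_j}$, and these are $\k$-linearly independent over distinct pairs $\{i,j\}$ by \cite[Proposition~2.6]{ABC1} (using $c_i+c_j\ne0$), hence nonzero. Consequently $\ell(\tau(\Omega_S))\ge\binom{N}{2}=\binom{n+s}{2}=ns+\binom{s}{2}+\binom{n}{2}$, with all these torsion classes having non-units in the last $s$ rows.

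Since $n\ge2$ we have $\binom{n}{2}\ge1$, so $\ell(\tau(\Omega_S))\ge ns+\binom{s}{2}+1$, and \Cref{thm on one more torsion} produces a nonzero element of $\tau(\Omega_R)$, as required. The only non-formal input is the structural fact that a quasi-homogeneous $S$ is analytically a monomial curve, so that all $n+s$ generators become monomials in one uniformizer; this is where the hypothesis is genuinely used, and I expect pinning it down (rather than the bookkeeping above) to deserve the most care, though it is classical and already implicit in \cite{scheja1970differentialmoduln}. Note that the gain $\binom{n+s}{2}-\big(ns+\binom{s}{2}\big)=\binom{n}{2}$ over the count available for general $S$ (cf.\ the remark following \Cref{thm on one more torsion}) comes from having $n+s$ monomial generators rather than only $s+1$ — and this needs only $S$, not $R$ itself, to be quasi-homogeneous, which is exactly why the statement is strictly stronger than Scheja's.
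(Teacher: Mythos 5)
Your proof is correct, but it reaches the hypotheses of \Cref{thm on one more torsion} by a genuinely different route than the paper. The paper never uses the monomial-curve structure of $S$: it quotes Scheja's theorem that a quasi-homogeneous ring of embedding dimension $n+s$ has at least $\binom{n+s}{2}$ independent torsion differentials, and it verifies the non-unit condition on the $dT_j$-coefficients for \emph{every} torsion element at once via the Euler derivation (torsion maps to $0$ under the induced surjection $\Omega_S\twoheadrightarrow \m_S$, $dx_i\mapsto x_i$, $dT_j\mapsto T_j$, so a unit coefficient would contradict minimality of the generating set of $\m_S$). You instead invoke the classical structural fact that a quasi-homogeneous one-dimensional complete local domain over an algebraically closed field of characteristic $0$ is a monomial curve $\k\ps{u^{c_1},\dots,u^{c_{n+s}}}$, then write down the $\binom{n+s}{2}$ explicit torsion elements $c_iy_idy_j-c_jy_jdy_i$, prove their $\k$-independence with \cite[Proposition 2.6]{ABC1} exactly as the paper does for the $\gamma_{ij}$, and get the non-unit condition from $\rho_{ij}\in\m_S\Omega_S$ together with minimality of the presentation in \Cref{defining matrix of S} (minimality holds because the defining ideal of $S$ lies in the square of the maximal ideal, so your ``dies in $\Omega_S\otimes_S\k$'' step is justified). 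What your route buys: Scheja's bound is not needed as a black box, the torsion is produced completely explicitly, and your count makes the surplus $\binom{n}{2}\ge 1$ transparent. What it costs: the monomial-curve normalization (graded normalization of a positively graded one-dimensional domain over algebraically closed $\k$ is $\k[u]$, forcing $S$ to be generated by powers of $u$) is an extra structural input the paper's argument never requires, and the Euler-derivation argument yields the stronger statement that \emph{no} torsion element of $\Omega_S$ has a unit in the last $s$ rows, whereas you check it only for your chosen elements --- which is all that \Cref{thm on one more torsion} asks for, so the proof is complete as written.
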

\begin{proof} Since $S$ is quasi-homogeneous, 
Scheja in \cite{scheja1970differentialmoduln} showed that $\mu(\tau(\Omega_R))\geq {n+s\choose 2}$ where $\edim S=n+s$.  After possibly a change of generators, we have that \\ $\Omega_S\twoheadrightarrow \n=(x_1,\dots,x_n,T_1,\dots,T_s), dx_i\rightarrow x_i,dT_j\rightarrow T_j$. (This map is well-defined because it is induced from the Euler derivation, see \cite[1.5]{Kunzbook}) Let $\tau$ be a torsion element in $\Omega_S$, and write $\tau=\sum_{i=1}^n r_idx_i+\sum_{j=1}^s r'_jdT_j$. Since $\tau\rightarrow 0$ under the above map, we have $\sum_{i=1}^n r_ix_i+\sum_{j=1}^s r'_jT_j=0$. Thus none of the $r'_j$ can be units in $S$, else it would be a contradiction to $x_1,\dots,x_n,T_1,\dots,T_s$ being a minimal generating set for $\n$. Now since $\Omega_S$ has ${n+s\choose 2}$ $\k$-linearly independent torsion elements and none of the coefficients of $dT_j$ in the description of the torsion elements are units, we can pull back a $\k$-linear combination of these torsion elements to a torsion element in $\Omega_R$ (by \Cref{thm on one more torsion}).
\end{proof}
\begin{example}
	Let $R=\k\ps{ t^5,t^8+t^{11},t^9+t^{11},t^{12}+t^{11}}$. Macaulay2 computations show that the conductor $\mathfrak{C}_R=(t^{13})\overline{R}$ and $S=R\left[\frac{\mathfrak{C}_R}{x_1}\right]$ is the monomial curve $\k\ps{t^5,t^8,t^9,t^{11},t^{12}}$. The embedding dimension of $S$ equals $5$ which is one more than that of $R$. Using the previous theorem we see that the torsion $\tau(\Omega_R)\neq 0$. In fact the deviation $d(R)=\mu(I)-\edim R+1=8-4+1=5$. The defining ideal of $R$ is of height three and is not a Gorenstein ideal.
	\end{example}

\section{Main Results}\label{main results}
In this section we prove some of the main results of this article. In the rest of this section we assume that $\C_R\subseteq \m^2_R$ and  $S=R\left[\frac{\mathfrak{C}_R}{x_1}\right]$ as in \Cref{kernelofnew_proof}. Recall that the $\k$-dimension of $S/R$ is called the reduced type of $R$, denoted by $s$. We proved that $s$ is always at most the type of $R$. In particular, if $R$ is Gorenstein, then $s = 1$. 

\begin{proposition}\label{last row unit}
	Suppose $S=R[\frac{\mathfrak{C}_R}{x_1}]$ and $\tau\in \Omega_S$. If $\tau=\sum r_idx_i+\sum_{j=1}^sr_{n+j}dT_j, r_i\in S$  such that $r_{n+j}$ is a unit in $S$ for some $j$, then for all $1\leq i\leq n$, $x_i\tau\neq 0$.
\end{proposition}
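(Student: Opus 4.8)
The plan is to argue by contradiction, working modulo $\m_S^2$. Fix $i$ and suppose $x_i\tau = 0$ in $\Omega_S$. Write $\tau = \sum_{k=1}^n r_k\,dx_k + \sum_{j=1}^s r_{n+j}\,dT_j$ as the column vector $(r_1,\dots,r_n,r_{n+1},\dots,r_{n+s})$ relative to the generators $dx_1,\dots,dx_n,dT_1,\dots,dT_s$ of $\Omega_S$. By \Cref{defmoduleofdiff} applied to the presentation of $S$ in \Cref{kernelofnew_proof}, $\Omega_S$ is the cokernel of the Jacobian matrix $A_S$ of \Cref{defining matrix of S}, so $x_i\tau = 0$ means that $x_i\cdot(r_1,\dots,r_{n+s})$ lies in the column space of $A_S$; say $x_i\cdot(r_1,\dots,r_{n+s}) = A_S\mu$ for some column vector $\mu$ over $S$, indexed by the defining relations of $S$. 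I then reduce this identity modulo $\m_S^2 S^{n+s}$. Every entry of $A_S$ lies in $\m_S$, since the defining relations of $S$ in \Cref{kernelofnew_proof} all lie in $(X_1,\dots,X_n,T_1,\dots,T_s)^2$; writing $\mu = \bar\mu + \mu'$ with $\bar\mu\in\k^N$ the tuple of constant terms and $\mu'\in\m_S^N$, one gets $A_S\mu'\in\m_S^2 S^{n+s}$, so only $\bar\mu$ survives. Thus the reduced identity is an equality in $\m_S S^{n+s}/\m_S^2 S^{n+s}\cong(\m_S/\m_S^2)^{n+s}$ of the form $\overline{x_i\cdot(r_1,\dots,r_{n+s})} = \overline{A_S}\,\bar\mu$, where $\overline{A_S}$ records the degree-one parts of the entries of $A_S$. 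Since $\C_R\subseteq\m_R^2$ forces $\edim S = n+s$, the classes $\overline{x_1},\dots,\overline{x_n},\overline{T_1},\dots,\overline{T_s}$ form a $\k$-basis of $\m_S/\m_S^2$, so comparing coordinates against them is meaningful.

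The heart of the argument is to compare two rows of this reduced identity. Let $j_0$ be an index with $r_{n+j_0}$ a unit in $S$, and set $\lambda := r_{n+j_0}(0)\neq0$, the constant term. On the left side, the $dT_{j_0}$-row reads $x_ir_{n+j_0}\equiv\lambda\,\overline{x_i}$ in $\m_S/\m_S^2$, whereas the $dx_i$-row reads $x_ir_i\equiv r_i(0)\,\overline{x_i}$, which has zero $\overline{T_{j_0}}$-coordinate. For the right side I read off $\overline{A_S}$ from \Cref{defining matrix of S}: since $f_\ell$, $g_{kl}$ and $h_{kl}$ all lie in $(X_1,\dots,X_n)^2$, every derivative entry of the form $\partial f_\ell/\partial x_k$, $\partial g_{kl}/\partial x_m$, $\partial h_{kl}/\partial x_m$ contributes only an $x$-linear form and never a $\overline{T}$. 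Hence in the $dT_{j_0}$-row the only column whose linear part involves $\overline{x_i}$ is the column attached to the relation $X_iT_{j_0}-g_{i,j_0}$, whose $dT_{j_0}$-entry is $x_i$; and in the $dx_i$-row the only column whose linear part involves $\overline{T_{j_0}}$ is again the column attached to $X_iT_{j_0}-g_{i,j_0}$, whose $dx_i$-entry is $T_{j_0}-\partial g_{i,j_0}/\partial x_i$. Let $c$ be the coordinate of $\bar\mu$ at this column. Comparing $\overline{x_i}$-coordinates in the $dT_{j_0}$-row yields $c=\lambda\neq0$, while comparing $\overline{T_{j_0}}$-coordinates in the $dx_i$-row yields $c=0$, a contradiction. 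Therefore $x_i\tau\neq0$, and since $i$ was arbitrary the proposition follows.

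I do not anticipate a genuine obstacle: the one delicate step is the row-by-row reading of $\overline{A_S}$ above, where one must check that among the columns of $A_S$, none other than the one for $X_iT_{j_0}-g_{i,j_0}$ contributes an $\overline{x_i}$ in the $dT_{j_0}$-row (the columns for $T_kT_l-h_{kl}$ contribute only $\overline{T}$'s there, those for $f_\ell$ contribute $0$, and the column for $X_kT_{j_0}-g_{k,j_0}$ with $k\neq i$ contributes $\overline{x_k}\neq\overline{x_i}$), nor a $\overline{T_{j_0}}$ in the $dx_i$-row (the same case analysis). It is worth noting that this argument never uses that $\tau$ is torsion; it applies to any $\tau\in\Omega_S$ whose last $s$ coefficients include a unit, so the conclusion $x_i\tau\neq0$ holds regardless of whether $x_i\tau$ happens to be torsion.
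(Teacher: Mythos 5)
Your argument is correct. You fix $i$, suppose $x_i\tau=0$, express this as $x_i(r_1,\dots,r_{n+s})$ lying in the column space of the Jacobian presentation of $\Omega_S$ from \Cref{defining matrix of S}, and reduce modulo $\m_S^2$: since $\ker\Psi\subseteq(X,T)^2$ (by \Cref{kernelofnew_proof}), all Jacobian entries lie in $\m_S$, so only the constant parts of the coefficient vector survive, and your two coordinate comparisons (the $\overline{x_i}$-coefficient in the $dT_{j_0}$-row forcing $c=\lambda\neq0$, the $\overline{T_{j_0}}$-coefficient in the $dx_i$-row forcing $c=0$) are exactly right; the case analysis of which columns can contribute is complete, and the basis claim for $\m_S/\m_S^2$ is justified by $\C_R\subseteq\m_R^2$. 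This is a genuinely different execution from the paper's proof, which instead passes to the Artinian quotient $S/J$ with $J=(x_i^2,x_iT_j,T_j^2,\text{all other variables})$, observes that the image of $x_i\tau$ there is $\overline{x_ir_{n+j}}\,dT_j$, and quotes Corti\~nas--Geller--Weibel \cite[Proposition 2.6]{ABC1} for its non-vanishing. The paper's route is shorter and reuses a lemma invoked repeatedly elsewhere in the article; yours is self-contained, in effect reproving the needed special case of that lemma by a direct syzygy/linear-algebra computation in $\Omega_{S/\m_S^2}$ (in the spirit of the example in Section 2), and it makes explicit which single column of the presentation would have to account for the relation, which is where the contradiction appears. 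Your closing observation that torsionness of $\tau$ is never used matches the paper, whose statement likewise assumes only $\tau\in\Omega_S$.
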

\begin{proof}
	Fix $i\leq n$.  Let $J=\langle x_i^2,x_iT_j,T_j^2, x_1,\dots,x_{i-1}, x_{i+1},\dots,x_n,T_1,\dots,T_{j-1},T_{j+1},\dots,T_s\rangle$. Writing $\overline{(\,)}$ for images in $\Omega_{R/J}$, $\overline{x_i\tau}=\overline{x_ir_i}dx_i+\overline{x_ir_{n+j}}dT_j$. If $r_i$ is not a unit in $S$, then $x_ir_i\in \m_S^2\subseteq J$. Thus $\overline{x_ir_i}dx_i=0$. If $r_i$ is a unit then $\overline{x_ir_i}dx_i=0$ as $x_i^2\in J$. Thus we have $\overline{\tau}=\overline{x_ir_{n+j}}dT_j$ which is nonzero in $\Omega_{S/J}$ by \cite[Proposition 2.6]{ABC1}.
\end{proof}

\begin{corollary}
	Under the hypothesis of the above theorem, for every $\tau=\sum r_idx_i+\sum_{j=1}^sr_{n+j}dT_j\in 0:_{\Omega_S}x_1$,  $r_{n+j}\in S$ cannot be a unit.
\end{corollary}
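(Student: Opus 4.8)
The plan is to obtain this as an immediate contrapositive of \Cref{last row unit}. Suppose $\tau=\sum_{i=1}^n r_idx_i+\sum_{j=1}^s r_{n+j}dT_j$ lies in $0:_{\Omega_S}x_1$, so that $x_1\tau=0$ in $\Omega_S$, and assume toward a contradiction that $r_{n+j}$ is a unit in $S$ for some $j$. Then $\tau$ satisfies exactly the hypothesis of \Cref{last row unit}: it is an element of $\Omega_S$ written in the generators $dx_1,\dots,dx_n,dT_1,\dots,dT_s$ whose coefficient $r_{n+j}$ of $dT_j$ is a unit. Applying \Cref{last row unit} with the index $i=1$ — which is legitimate since $n\geq 2$, so $x_1$ is among $x_1,\dots,x_n$ — yields $x_1\tau\neq 0$, contradicting $x_1\tau=0$. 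Hence no $r_{n+j}$ can be a unit in $S$, which is the assertion.

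There is essentially no obstacle here: all of the content sits in \Cref{last row unit}, and the corollary is merely the specialization of its conclusion to the submodule $0:_{\Omega_S}x_1$. The only point worth noting is that \Cref{last row unit} in fact delivers the stronger conclusion $x_i\tau\neq 0$ for \emph{every} $i$ with $1\leq i\leq n$, so the same one-line argument shows that an element of $\Omega_S$ carrying a unit coefficient on some $dT_j$ cannot be annihilated by any of the $x_i$; restricting attention to $x_1$ is all that the stated corollary requires.
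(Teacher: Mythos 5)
Your proof is correct and matches the paper's own argument, which likewise derives the corollary directly from \Cref{last row unit}: since an element with a unit coefficient on some $dT_j$ cannot be annihilated by any $x_i$, in particular not by $x_1$, no element of $0:_{\Omega_S}x_1$ can have such a unit coefficient. Nothing further is needed.
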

\begin{proof}
	Follows  from \Cref{last row unit}.
\end{proof}
\begin{lemma}\label{typecomparisonlemma}
	Let $\mathfrak{C}_R\subseteq \m_R^2$ and $S=R[\frac{\mathfrak{C}}{x_1}]$ with $\edim(S)=n+s$ where $s$ is the reduced type of $R$. Then $$\type(S)\leq \type(R)+s(n-1).$$
\end{lemma}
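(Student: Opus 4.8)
The strategy is to relate the type of $S$ to that of $R$ using the short exact sequence $0\to R\to S\to \k^s\to 0$ and duality into a canonical module, in the same spirit as the last statement of \Cref{canonicalmoduledescription}. Recall $\type(R)=\mu_R(\omega_R)=\dim_\k(\omega_R/\m_R\omega_R)$, and similarly $\type(S)=\mu_S(\omega_S)=\dim_\k(\omega_S/\m_S\omega_S)$; since $S$ is module-finite over $R$ and $\m_S\cap R=\m_R$ with $S/\m_S=R/\m_R=\k$, the number $\mu_S(\omega_S)$ equals $\dim_\k(\omega_S/\m_S\omega_S)$ computed as $R$-modules. So the goal is to bound $\dim_\k(\omega_S/\m_S\omega_S)$ in terms of $\dim_\k(\omega_R/\m_R\omega_R)=\type(R)$ and $s$.

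First I would invoke \Cref{canonical module of S}, which gives $\omega_S=\omega_R\cap\overline{\m_R\omega_R}$ as a fractional ideal, together with the exact sequence $0\to\omega_S\to\omega_R\to\Ext^1_R(\k^s,\omega_R)\cong\k^s\to 0$ from the proof of \Cref{canonicalmoduledescription}. Tensoring $0\to\omega_S\to\omega_R\to\k^s\to 0$ with $R/\m_R$ over $R$ yields an exact sequence
\begin{align*}
\Tor_1^R(\k^s,\k)\to \omega_S/\m_R\omega_S\to \omega_R/\m_R\omega_R\to \k^s\to 0,
\end{align*}
so $\dim_\k(\omega_S/\m_R\omega_S)\leq \dim_\k(\omega_R/\m_R\omega_R)+\dim_\k\Tor_1^R(\k^s,\k)=\type(R)+sn$, using $\Tor_1^R(\k,\k)\cong\k^n$ since $\edim R=n$. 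This already gives a bound of the right shape but with $sn$ in place of $s(n-1)$. To sharpen it by $s$, I would observe that the map $\omega_R/\m_R\omega_R\to\k^s$ is surjective, so its image accounts for $s$ dimensions on the $\omega_R$ side; more precisely, the correct count is $\dim_\k(\omega_S/\m_R\omega_S)\leq \type(R)-s+\dim_\k(\text{image of }\Tor_1^R(\k^s,\k))$ once one tracks that $\omega_S/\m_R\omega_S$ surjects onto $\ker(\omega_R/\m_R\omega_R\to\k^s)$, which has dimension $\type(R)-s$. Then $\mu_S(\omega_S)=\dim_\k(\omega_S/\m_S\omega_S)\leq \dim_\k(\omega_S/\m_R\omega_S)\leq (\type(R)-s)+sn=\type(R)+s(n-1)$, since $\m_S\omega_S\supseteq\m_R\omega_S$.

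The main obstacle is the passage from $\m_R\omega_S$ to $\m_S\omega_S$: a priori $\mu_S(\omega_S)=\dim_\k(\omega_S/\m_S\omega_S)$ could be strictly smaller than $\dim_\k(\omega_S/\m_R\omega_S)$, so the inequality above goes the right way, but one must be careful that $\type(S)$ is genuinely $\mu_S(\omega_S)$ and not $\mu_R(\omega_S)$. This is fine because over a one-dimensional Cohen–Macaulay local ring $S$, $\type(S)=\mu_S(\omega_S)$, and $\m_S\omega_S\supseteq\m_R\omega_S$ gives the desired direction. A secondary point to verify carefully is the exact bookkeeping of the connecting map $\Tor_1^R(\k^s,\k)\to\omega_S/\m_R\omega_S$ and that its image has dimension at most $sn$; this is where the factor $s(n-1)$ rather than $sn$ must be extracted, by combining the $-s$ coming from $\mathrm{coker}=\k^s$ with the $+sn$ from $\Tor_1$. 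I would present this as a clean diagram chase on the four-term exact sequence, avoiding any explicit choice of generators.
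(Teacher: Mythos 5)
Your proof is correct and follows essentially the same route as the paper: tensoring the exact sequence $0\to\omega_S\to\omega_R\to\k^s\to 0$ from \Cref{canonicalmoduledescription} with $\k$ and counting dimensions, using $\dim_\k\Tor_1^R(\k^s,\k)=ns$ and the surjectivity onto $\k^s$ to get $\type(S)\leq\type(R)+s(n-1)$. Your extra care in passing from $\dim_\k(\omega_S/\m_R\omega_S)$ to $\mu_S(\omega_S)=\type(S)$ via $\m_R\omega_S\subseteq\m_S\omega_S$ is a point the paper leaves implicit, but the argument is the same.
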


\begin{proof}
	Using \Cref{canonicalmoduledescription}, we get the following short exact sequence
	$$0\to \omega_S\to \omega_R\to \k^s\to 0$$ and tensoring this sequence with $\k$, we get $$\Tor^R_1(\k,\k^s)\to \omega_S\otimes_R \k\to\omega_R\otimes_R \k\to \k^s\to 0.$$ Comparing the length of the modules appearing this short exact sequence now yields, $\dim_\k\Tor^R_1(\k,\k^s)+\type(R)\geq \type(S)+s$ (\cite[Theorem 3.3.11]{bruns_herzog_1998}), and hence $$\type(S)\leq ns-s+\type(R)=\type(R)+s(n-1).$$
\end{proof}

\begin{proposition}\label{powerofm} Let $R$ and $S$ be as in Lemma \ref{typecomparisonlemma}.  Then $\m_S^k\subseteq x_1S$ if and only if $\m_R^k\subseteq (x_1,\C_R)R$.
\end{proposition}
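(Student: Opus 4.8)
The plan is to use the description $S = \k\ps{x_1,\dots,x_n,T_1,\dots,T_s}$ from \Cref{kernelofnew_proof} together with the key structural facts $\m_S = \m_R S + (T_1,\dots,T_s)$, $\m_S^2 = \m_R^2$ (which was observed in the proof of \Cref{one more torsion lemma}), $T_j x_i \in \C_R$ for all $i,j$, and $T_kT_l \in \C_R$ for all $k,l$ (all three from \Cref{kernelofnew_proof} and \Cref{conductorbasicprops}(3)). The heart of the matter is to translate a containment in $S$, which involves the extra variables $T_j$, into a containment in $R$ that only involves the $T_j$ through the conductor.

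For the forward direction, assume $\m_S^k \subseteq x_1 S$. Since $\m_R \subseteq \m_S$ we get $\m_R^k \subseteq \m_S^k \subseteq x_1 S$. Now I would intersect with $R$: it suffices to show $x_1 S \cap R \subseteq (x_1, \C_R)R$. Indeed, by \Cref{conductorbasicprops}(5) we can write any element of $S$ as $r + \sum_j k_j T_j$ with $r \in R$ and $k_j \in \k$, where each $T_j = \Psi(T_j) = t^{b_j}$; multiplying by $x_1$ sends $\sum_j k_j T_j$ into $\C_R$ because $x_1 T_j \in \C_R$. Hence $x_1 S \subseteq x_1 R + \C_R \subseteq (x_1, \C_R)R$, and in particular $\m_R^k \subseteq (x_1,\C_R)R$.

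For the reverse direction, assume $\m_R^k \subseteq (x_1, \C_R)R$. I need $\m_S^k \subseteq x_1 S$. First reduce to $k \geq 2$ (the case $k \leq 1$ forces $R$ regular, where the statement is trivial, since $\C_R = R$ would give $\m_R \subseteq (x_1)$). Since $\m_S^2 = \m_R^2$, for $k \ge 2$ we have $\m_S^k = \m_S^{k-2}\m_S^2 = \m_S^{k-2}\m_R^2 \subseteq \m_R^2 \cdot S \cdot \m_R^{k-2}$ — more carefully, $\m_S^k = \m_R^2 \m_S^{k-2}$, and since $\m_S = \m_R S + \sum_j S T_j$ with $\m_R^2 T_j \subseteq \m_R \C_R \subseteq x_1 \C_R \subseteq x_1 S$, an easy induction shows $\m_S^k \subseteq \m_R^k S + x_1 S$. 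Then $\m_R^k S \subseteq (x_1, \C_R) S = x_1 S + \C_R S$, and $\C_R S \subseteq \C_R \cdot \frac{\C_R}{x_1} \cdot (\text{stuff}) $; more directly, $\C_R = x_1 \cdot \frac{\C_R}{x_1} \subseteq x_1 S$, so $\C_R S \subseteq x_1 S$. Combining, $\m_S^k \subseteq x_1 S$.

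The main obstacle is bookkeeping the inductive step $\m_S^k \subseteq \m_R^k S + x_1 S$: one must check that every product of $k$ elements of $\m_S$, when expanded over the generators $x_i$ and $T_j$, either lies in $\m_R^k S$ or picks up a factor $x_i T_j$ or $T_kT_l$ which lies in $\C_R \subseteq x_1 S$ and then can absorb the remaining $\m_S$-factors into $x_1 S$ (using $\C_R \m_S \subseteq \C_R \subseteq x_1 S$, since $\C_R$ is an ideal of $S$ by \Cref{conductorbasicprops}(4)). Once that containment is in hand, both directions close quickly.
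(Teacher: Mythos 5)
Your proof is correct and follows essentially the same route as the paper: both directions hinge on the identity $x_1S=(x_1,\C_R)R$, and the reverse direction expands $\m_S^k$ and pushes every term involving the adjoined elements into $\C_R\subseteq x_1S$ using $\C_R\subseteq\m_R^2$ and $\m_R\C_R=x_1\C_R$. The paper phrases this more compactly as $\m_S^k\subseteq\m_R^k+\C_R$ with $\m_S=(\m_R+\frac{\C_R}{x_1})S$ (implicitly for $k\ge 2$, just as you assume), so your bookkeeping with the $T_j$'s and the induction is simply a more explicit version of the same computation.
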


\begin{proof} First we observe that $x_1S = (x_1,\C_R)R$.  Since $\m_R\subseteq \m_S$, one implication follows at once.  To prove the reverse implication, assume that
$\m_R^k\subseteq (x_1,\C_R)R$.  It then suffices to prove that $\m_S^k\subseteq \m_R^k + \C_R$.  A typical term in the expansion of $\m_S^k = (\m_R + \frac{\mathfrak{C}}{x_1})^kS$,
which is not $\m_R^k$, is of the form $\m_R^i(\frac{\mathfrak{C}}{x_1})^{k-i}$ for $0\leq i\leq k-1$. Since $\C_R\subseteq \m_R^2$ and $k-i\geq 1$, this term is contained in
$$\m_R^i(\m_R^{2k-2i-2})(\frac{\C_R}{x_1})\subseteq \C_R$$ as $\m_R\C_R = x_1\C_R$.
\end{proof}

\begin{lemma}[\cite{Guttes1990}]\label{gutteslemma}
	Let $(R,\m_R,\k)$ be a one dimensional complete local reduced $\k$-algebra with $\Char(\k)=0$ and embedding dimension $n\geq 3$. Suppose $y$ is a non-zero divisor such that $\edim(R/R.y)=n-1$
	\begin{itemize}
		\item[$a)$] If $\m_R^4\subseteq R.y$, then \[\ell(0:_{\Omega_R} y)\geq \frac{(n-2)(n-1)}{2}.\]
		
		\item[$b)$] If $\m_R^5\subseteq R.y$, then \[\ell(0:_{\Omega_R} y)\geq \frac{(n-2)(n-1)}{2}-\type(R).\]
	\end{itemize}
\end{lemma}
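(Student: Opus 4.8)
The plan is to transfer the whole estimate to the Artinian quotient $A:=R/yR$, for which $\edim A=n-1$, $\type A=\type R$, and the hypothesis reads $\m_A^{4}=0$ in part $a$ and $\m_A^{5}=0$ in part $b$. The first ingredient is the length identity
\[
\ell\bigl(0:_{\Omega_R}y\bigr)=\ell\bigl(\Omega_R/y\Omega_R\bigr)-\ell(R/yR).
\]
To get it, set $T=\tau(\Omega_R)$; then $\Omega_R/T$ is finitely generated and torsion-free of rank one, hence maximal Cohen--Macaulay, so $0:_{\Omega_R/T}y=0$ and $\ell\bigl((\Omega_R/T)/y(\Omega_R/T)\bigr)=\ee(y;\Omega_R/T)=\ee(y;R)=\ell(R/yR)$. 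Applying the snake lemma to multiplication by $y$ on $0\to T\to\Omega_R\to\Omega_R/T\to0$ then identifies $0:_{\Omega_R}y$ with $0:_Ty$ and yields $\ell(0:_Ty)=\ell(T/yT)=\ell(\Omega_R/y\Omega_R)-\ell(R/yR)$.

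Next I would feed in $\Omega_A$ through the conormal (Jacobi--Zariski) sequence of $R\twoheadrightarrow A$. Since $y$ is a nonzerodivisor, $yR/y^2R\cong A$ is free of rank one, and the sequence reads
\[
A\ \xrightarrow{\ \overline{dy}\ }\ \Omega_R/y\Omega_R\ \longrightarrow\ \Omega_A\ \longrightarrow\ 0 ,
\]
whence $\ell(\Omega_R/y\Omega_R)=\ell(\Omega_A)+\ell(R/yR)-\ell\bigl(\Ann_A(\overline{dy})\bigr)$ and, combining with the identity above,
\[
\ell\bigl(0:_{\Omega_R}y\bigr)=\ell(\Omega_A)-\ell\bigl(\Ann_A(\overline{dy})\bigr).
\]
To bound the correction $\Ann_A(\overline{dy})$ I would compose with $\Omega_R/y\Omega_R\to\Omega_{\overline R}/y\Omega_{\overline R}=\overline R/y\overline R$ induced by $R\hookrightarrow\overline R=\k\ps{t}$: under it $\overline{dy}\mapsto\overline{dy/dt}$, which is a unit times $t^{\ord(y)-1}$, hence generates the socle of the discrete valuation ring quotient $\overline R/y\overline R$; its annihilator there is the maximal ideal, so $\Ann_A(\overline{dy})\subseteq\m_A$, and sharper bookkeeping with the valuation (now using $\m_R^{4}\subseteq yR$, resp.\ $\m_R^{5}\subseteq yR$) trims it further.

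The crux, and the step I expect to be the main obstacle, is a lower bound on $\ell(\Omega_A)$ for the Artinian local $\k$-algebra $A$ with $\edim A=n-1$ subject to $\m_A^{4}=0$ (resp.\ $\m_A^{5}=0$). Writing $A=\k\ps{X_1,\dots,X_{n-1}}/J$ minimally with $J\subseteq\m^2$, one has $\Omega_A=A^{n-1}/\im(\mathrm{Jac})$ and $\im(\mathrm{Jac})\subseteq\m_A^{\oplus(n-1)}$, so $\ell(\Omega_A)=(n-1)+\ell\bigl(\m_A^{\oplus(n-1)}/\im(\mathrm{Jac})\bigr)$. I would filter $\m_A^{\oplus(n-1)}$ by powers of $\m_A$ and count, one graded layer at a time, how many independent vectors the columns $d(f)$, $f\in J$, can fill; the antisymmetry $x_i\,dx_j+x_j\,dx_i=d(x_ix_j)$ forces the quadratic layer of $\im(\mathrm{Jac})$ to omit a subspace of dimension $\binom{n-1}{2}$, while $\m_A^{4}=0$ keeps the higher layers from making up for it. Together with the bound on $\Ann_A(\overline{dy})$ this gives $\ell(0:_{\Omega_R}y)\ge\binom{n-1}{2}$ in case $a$. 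Under the weaker hypothesis $\m_A^{5}=0$ exactly one additional layer survives uncontrolled, of dimension $\dim_\k\mathrm{soc}(A)=\type A=\type R$, which accounts for the subtracted $\type(R)$ in case $b$. The delicacy is entirely in making the graded count of $\im(\mathrm{Jac})$ tight and in showing the correction term does not erode the resulting bound.
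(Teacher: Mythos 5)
You should first note that the paper itself does not prove this lemma: it is quoted from G\"uttes, with the proof deferred to \cite[Satz 4]{Guttes1990} and the Anmerkung on pp.~506--507, so your proposal is measured against that argument rather than anything in the present paper. Your reductions are fine and are indeed the standard opening moves: since $\Omega_R/\tau(\Omega_R)$ is torsion-free of rank one at each minimal prime, hence maximal Cohen--Macaulay, the snake lemma and the multiplicity count give $\ell(0:_{\Omega_R}y)=\ell(\Omega_R/y\Omega_R)-\ell(R/yR)$, and the conormal sequence for $A=R/yR$ gives $\ell(0:_{\Omega_R}y)=\ell(\Omega_A)-\ell\bigl(\Ann_A(\overline{dy})\bigr)$. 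Also $\type A=\type R$ is correct since $y$ is a nonzerodivisor on the Cohen--Macaulay ring $R$.

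The gap is in the step you yourself flag as the crux, and it is not a matter of routine bookkeeping. Your graded count of $\im(\mathrm{Jac})$ only exploits the degree-one layer: the linear parts of the columns $d(f)$, $f\in J$, lie in the "symmetric" subspace of $(\m_A/\m_A^2)^{\oplus(n-1)}$, which yields $\ell(\Omega_A)\geq (n-1)+\binom{n-1}{2}$ for \emph{every} Artinian local $A$ of embedding dimension $n-1$ --- note this uses neither $\m_A^4=0$ nor $\m_A^5=0$, so the hypothesis $\m_R^4\subseteq yR$ (resp. $\m_R^5\subseteq yR$) never actually enters your argument. On the other side, your only control of the correction term is $\Ann_A(\overline{dy})\subseteq\m_A$ (which just says $\overline{dy}\neq 0$), i.e. $\ell(\Ann_A(\overline{dy}))\leq\ell(A)-1$; combined with the bound above this gives $\ell(0:_{\Omega_R}y)\geq (n-1)+\binom{n-1}{2}-(\ell(A)-1)$, which is vacuous as soon as $\m_A^2\neq 0$ (under $\m_A^4=0$ one can have $\ell(A)$ of order $n^3$). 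The entire content of G\"uttes's Satz 4 is exactly the quantitative interplay you defer to "sharper bookkeeping": one must show that, under $\m_A^4=0$ (resp. $\m_A^5=0$), the Jacobian image in the higher layers together with the cyclic module $A\,\overline{dy}$ cannot absorb the deficiency, i.e. one needs a lower bound on $\ell(\Omega_A)-\ell(\Ann_A(\overline{dy}))$, not on $\ell(\Omega_A)$ alone. Likewise, for part (b) the assertion that "exactly one additional layer survives uncontrolled, of dimension $\dim_\k\mathrm{soc}(A)=\type R$" is stated, not proved; the socle is not a layer of your filtration, and no mechanism is given that produces the $-\type(R)$ correction. As it stands the proposal is a plausible framework that reproduces the known reduction but omits the length estimates that constitute the actual proof; either supply those estimates or cite \cite{Guttes1990} as the paper does.
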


\begin{proof}
	For $a)$, we refer the reader to the proof of \cite[Satz 4]{Guttes1990}. For $(b)$, see the proof of \cite[Anmerkung, Page 506-507]{Guttes1990}.
\end{proof}

\begin{theorem}\label{m^4 in C}
	Suppose $\m_R^4\subseteq (\C_R,x_1)$ and $n(n - 3)\geq 2s$ where $s$ is the reduced type of $R$, then $\tau(\Omega_R)\neq 0$.
\end{theorem}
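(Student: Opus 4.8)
The plan is to apply Theorem~\ref{thm on one more torsion} to the ring $S=R[\frac{\mathfrak{C}_R}{x_1}]$, so the entire task reduces to producing enough torsion in $\Omega_S$. Concretely, I must show that $\ell(\tau(\Omega_S))\geq ns+\binom{s}{2}+1$ and that every torsion element can be arranged to have non-units in the last $s$ rows (the rows corresponding to $dT_1,\dots,dT_s$). The second condition will follow from Lemma~\ref{one more torsion lemma} together with Proposition~\ref{last row unit}: any torsion element with a unit in one of the last $s$ rows would be a non-zerodivisor multiple issue — more precisely, by Proposition~\ref{last row unit}, $x_1\tau\neq 0$, contradicting $\tau\in\tau(\Omega_S)$ (since $x_1$ is a nonzerodivisor and torsion is killed by some nonzero element, in fact by a power of $x_1$ after multiplying). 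So I would first record that every element of $\tau(\Omega_S)$ automatically has non-units in the last $s$ rows, and then the only real content is the length bound.

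For the length bound, the idea is to feed the ring $S$ into G\"uttes' Lemma~\ref{gutteslemma}(a). First I need the hypothesis $\mathfrak{C}_R^{}\subseteq\m_R^2$ to be in force (it is, throughout Section~\ref{main results}); this guarantees $\edim S = n+s$, and I should check $n+s\geq 3$, which holds since $n\geq 2$. Next, apply Proposition~\ref{powerofm}: the hypothesis $\m_R^4\subseteq(\mathfrak{C}_R,x_1)R$ is exactly equivalent to $\m_S^4\subseteq x_1 S$. Taking $y=x_1$ in Lemma~\ref{gutteslemma}(a) — note $x_1$ is a nonzerodivisor in $S$ and $\edim(S/x_1S)=\edim S-1=n+s-1$ since $x_1\in\m_S\setminus\m_S^2$ — yields
\[
\ell(0:_{\Omega_S}x_1)\ \geq\ \frac{(n+s-2)(n+s-1)}{2}.
\]
Since $0:_{\Omega_S}x_1\subseteq\tau(\Omega_S)$ (again because $x_1$ is a nonzerodivisor and $\Omega_{\overline{S}}=\Omega_{\overline{R}}$ is torsion-free), we get $\ell(\tau(\Omega_S))\geq\frac{(n+s-2)(n+s-1)}{2}$.

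It remains to verify the arithmetic inequality $\frac{(n+s-2)(n+s-1)}{2}\geq ns+\binom{s}{2}+1$. Expanding, the left side is $\frac12\big(n^2+s^2+2ns-3n-3s+2\big)$ and the right side is $\frac12\big(2ns+s^2-s+2\big)$, so the inequality is equivalent to $n^2-3n-2s\geq 0$, i.e. exactly the hypothesis $n(n-3)\geq 2s$. Then Theorem~\ref{thm on one more torsion} applies: a $\k$-linear combination of these torsion elements of $\Omega_S$ pulls back to a nonzero torsion element of $\Omega_R$, so $\tau(\Omega_R)\neq 0$.

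The step I expect to require the most care is the bookkeeping around applying G\"uttes' lemma to $S$ rather than to $R$ — in particular confirming that all hypotheses of Lemma~\ref{gutteslemma} genuinely transfer (that $S$ is still a one-dimensional complete reduced local $\k$-algebra with $\Char\k=0$, which it is since $S\subseteq\overline{R}$; that $\edim(S/x_1S)$ drops by exactly one; and that $x_1$ is the right choice of nonzerodivisor), and the clean identification $0:_{\Omega_S}x_1\subseteq\tau(\Omega_S)$. The inequality manipulation is routine once set up, and the ``non-unit in the last $s$ rows'' hypothesis of Theorem~\ref{thm on one more torsion} is handled uniformly by Proposition~\ref{last row unit}.
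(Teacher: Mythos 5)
Your proposal follows the paper's proof essentially verbatim: Proposition~\ref{powerofm} to get $\m_S^4\subseteq x_1S$, G\"uttes' Lemma~\ref{gutteslemma}(a) applied to $S$ with $y=x_1$ to bound $\ell(0:_{\Omega_S}x_1)$, the arithmetic reduction of $\frac{(n+s-2)(n+s-1)}{2}\geq ns+\binom{s}{2}+1$ to $n(n-3)\geq 2s$, and then Theorem~\ref{thm on one more torsion}. The length bound, the checks on $\edim S$ and on $x_1$, and the inequality are all correct.

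One justification is off, though it is easily repaired. You claim that \emph{every} element of $\tau(\Omega_S)$ has non-units in the last $s$ rows, arguing that a unit there would give $x_1\tau\neq 0$ by Proposition~\ref{last row unit}, ``contradicting $\tau\in\tau(\Omega_S)$.'' There is no contradiction: a torsion element need not be annihilated by $x_1$ itself, only by some power of $x_1$ (or some nonzero element), so $x_1\tau\neq 0$ is perfectly compatible with $\tau$ being torsion, and the blanket statement about all of $\tau(\Omega_S)$ is not established. What the argument actually needs, and what the paper uses (it is exactly the corollary following Proposition~\ref{last row unit}), is the weaker statement that every element of $0:_{\Omega_S}x_1$ has non-units in the last $s$ rows --- immediate from Proposition~\ref{last row unit}, since such an element \emph{is} killed by $x_1$. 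Since your count $\frac{(n+s-2)(n+s-1)}{2}$ of linearly independent torsion elements already lives inside $0:_{\Omega_S}x_1$, you should simply choose the elements fed into Theorem~\ref{thm on one more torsion} from that submodule; with that adjustment the proof is complete and coincides with the paper's.
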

\begin{proof}
We first construct $S=R\left[\frac{\C_R}{x_1}\right]$ with $\edim S=n+s$ where $\edim R=n$. By \Cref{powerofm}, we have that $\m^4_S\subseteq x_1S$. Since $x_1$ is a nonzero divisor on $S$, by definition of torsion submodule we have $0:_{\Omega_S}x_1\subseteq \tau(\Omega_S)$. Now using  \Cref{gutteslemma}(a) with $R=S$ and $y=x_1$, we get that 
\begin{align*}
	\ell(\tau(\Omega_S))\geq \ell(0:_{\Omega_S}x_1)\geq\frac{(n+s-2)(n+s-1)}{2},
\end{align*}
where the latter number is more than $ns+{s\choose 2}+1$ when $n(n-3)\geq 2s$. The result now follows from \Cref{thm on one more torsion}.
\end{proof}
\begin{theorem}\label{mainthm}
	Let $S=R\left[\frac{\C}{x_1}\right]$ with $\edim S=n+s$ and assume that $$\displaystyle\text{type}(R)\leq \frac{n^2-3n-2ns}{2}.$$ If $\m_R^5\subseteq \mathfrak{C}_R+ x_1R$, then $\tau(\Omega_R)\neq 0$.
\end{theorem}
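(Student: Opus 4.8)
The plan is to run the proof of \Cref{m^4 in C} essentially verbatim, but with \Cref{gutteslemma}(a) replaced by \Cref{gutteslemma}(b) and with the resulting ``type correction'' absorbed using \Cref{typecomparisonlemma}. First I would form $S=R[\frac{\C_R}{x_1}]=R[T_1,\dots,T_s]$ as in \Cref{kernelofnew_proof}, so that $\edim S=n+s$ and $S$ is a complete local domain (a subring of $\overline R=\k\ps t$, module-finite over $R$). Since $\m_R^5\subseteq(\C_R,x_1)R$, \Cref{powerofm} upgrades this to $\m_S^5\subseteq x_1S$. Then I want to apply \Cref{gutteslemma}(b) to the ring $S$ with $y=x_1$: I must check that $x_1$ is a nonzerodivisor on $S$ (clear, $S$ is a domain, and hence $0:_{\Omega_S}x_1\subseteq\tau(\Omega_S)$) and that $\edim(S/x_1S)=n+s-1$. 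The latter holds because $\C_R\subseteq\m_R^2$ forces $\m_S^2=\m_R^2$ (as in the proof of \Cref{one more torsion lemma}), so the minimal generator $x_1$ of $\m_R$ still avoids $\m_S^2$; also $\edim S=n+s\geq 3$ since $n\geq 2$ and $s\geq 1$, so the lemma is applicable.

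With these in place \Cref{gutteslemma}(b) gives $\ell(0:_{\Omega_S}x_1)\geq\frac{(n+s-2)(n+s-1)}{2}-\type(S)$, and \Cref{typecomparisonlemma} gives $\type(S)\leq\type(R)+s(n-1)$, hence $\ell(0:_{\Omega_S}x_1)\geq\frac{(n+s-2)(n+s-1)}{2}-\type(R)-s(n-1)$. The one real computation is the identity $\frac{(n+s-2)(n+s-1)}{2}-s(n-1)-{s\choose 2}=\frac{(n-1)(n-2)}{2}$, whose right-hand side no longer involves $s$; subtracting a further $ns$ turns the target inequality $\ell(0:_{\Omega_S}x_1)\geq ns+{s\choose 2}+1$ into precisely $\type(R)\leq\frac{n^2-3n-2ns}{2}$, which is the hypothesis. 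Since $0:_{\Omega_S}x_1$ has finite length with all composition factors equal to $\k$, its length equals its $\k$-dimension, so it contains at least $ns+{s\choose 2}+1$ many $\k$-linearly independent torsion elements.

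Finally, to feed these into \Cref{thm on one more torsion} I need each of them to have a non-unit in the last $s$ rows (the coefficients of $dT_1,\dots,dT_s$); this is exactly the corollary to \Cref{last row unit}, which says every element of $0:_{\Omega_S}x_1$ has this property. Then \Cref{thm on one more torsion} pulls a $\k$-linear combination of these elements back to a nonzero torsion element of $\Omega_R$, giving $\tau(\Omega_R)\neq 0$.

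I do not expect a genuine obstacle here: the argument is structurally identical to \Cref{m^4 in C}. The only two points needing care are (i) the arithmetic cancellation above, which must be carried out carefully so that the hypothesis on $\type(R)$ is seen to be exactly (not merely sufficiently) what is required — and here the key conceptual point is that the bound from \Cref{gutteslemma}(b) degrades by $\type(S)$, not $\type(R)$, so \Cref{typecomparisonlemma} is indispensable — and (ii) checking that the hypotheses of \Cref{gutteslemma}(b) and of \Cref{thm on one more torsion} really do hold for $S$ (nonzerodivisor, embedding dimension dropping by exactly one, non-units in the last $s$ rows), each of which follows readily from results already established.
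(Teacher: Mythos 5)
Your proposal is correct and follows essentially the same route as the paper's own proof: $\m_S^5\subseteq x_1S$ via Proposition \ref{powerofm}, the length bound from Lemma \ref{gutteslemma}(b) applied to $S$ with $y=x_1$, the correction $\type(S)\leq\type(R)+s(n-1)$ from Lemma \ref{typecomparisonlemma}, the non-unit condition from Proposition \ref{last row unit}, and then Theorem \ref{thm on one more torsion}. Your arithmetic reduction of the required inequality to $\type(R)\leq\frac{n^2-3n-2ns}{2}$ matches the paper's, and your explicit verification of the hypotheses of Lemma \ref{gutteslemma}(b) (nonzerodivisor, $\edim(S/x_1S)=n+s-1$, $\edim S\geq 3$) is a harmless addition of detail the paper leaves implicit.
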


\begin{proof}
Since $\mathfrak{C}_R\subseteq \m^2$, 
using  \Cref{powerofm}, we immediately obtain that $\m_S^5\subseteq x_1S$. 
	
Since $x_1$ is a nonzero divisor on $S$, by definition of torsion submodule, $0:_{\Omega_S}x_1\subseteq \tau(\Omega_S)$. By \Cref{gutteslemma}(b) with $R=S$ and $y=x_1$, we get that $$\ell(\tau(\Omega_S))\geq \ell(0:_{\Omega_S}x_1)\geq \frac{(n+s-2)(n+s-1)}{2}-\text{type}(S).$$ 
Here we used that the embedding dimension of $S$ is $n+s$, which follows from the presentation of $S$ given in Remark \ref{defining matrix of S}. All torsion elements in $(0:_{\Omega_S}x_1)$  have non-units in the last row by \Cref{last row unit}. It follows from Theorem \ref{thm on one more torsion} that it suffices to prove 
	 $$\frac{(n+s-2)(n+s-1)}{2}-\text{type}(S)\geq ns+{s\choose 2}+1.$$
	 Using \Cref{typecomparisonlemma}, it suffices to prove that
	$$\frac{(n+s-2)(n+s-1)}{2}-\text{type}(R)-s(n-1)\geq ns+{s\choose 2}+1$$ or 
	$$\text{type}(R)\leq \frac{(n+s-2)(n+s-1)}{2}-2ns+s-{s\choose 2}-1,$$
	which simplifies to our assumption on the type. \end{proof}

\begin{corollary}
	Let $R$ be of reduced type one such that $\text{type}(R)\leq {n\choose 2}-2n$. If $\m_R^5\subseteq \mathfrak{C}_R+ x_1R$, then $\tau(\Omega_R)\neq 0$.
\end{corollary}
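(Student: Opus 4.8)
The plan is to obtain this as the specialization $s=1$ of \Cref{mainthm}. First recall that ``$R$ of reduced type one'' means exactly $s(R)=1$, i.e.\ $\dim_\k\frac{(\C_R,x_1)}{(x_1)}=1$; by \Cref{conductorbasicprops}(5) and the remarks following it this gives $\edim S = n+1$ for $S=R[\frac{\C_R}{x_1}]$, so the whole machinery of \Cref{kernelofnew_proof}, \Cref{last row unit} and \Cref{mainthm} is available. (As always, if $\C_R\not\subseteq\m_R^2$ there is nothing to prove by \Cref{valuation more than conductor}, so we may assume the standing hypothesis $\C_R\subseteq\m_R^2$ of this section.)

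Next I would substitute $s=1$ into the numerical hypothesis of \Cref{mainthm}. The bound $\type(R)\leq\frac{n^2-3n-2ns}{2}$ becomes $\type(R)\leq\frac{n^2-3n-2n}{2}=\frac{n^2-5n}{2}$, and the trivial identity $\frac{n^2-5n}{2}=\frac{n(n-1)}{2}-2n=\binom{n}{2}-2n$ shows this is precisely the hypothesis of the corollary. Since $\m_R^5\subseteq\C_R+x_1R$ is assumed as well, \Cref{mainthm} applies verbatim and gives $\tau(\Omega_R)\neq 0$.

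So there is essentially no obstacle: the corollary merely repackages \Cref{mainthm} for reduced type one, the point being that one gets a conclusion ``in the spirit of the Gorenstein case'' (where necessarily $s=1$) without assuming $R$ Gorenstein --- recall that $s=1$ does not force the Gorenstein property, as the example $R=\k\ps{t^4,t^{11},t^{17}}$ shows. The only thing worth noting is that $\binom{n}{2}-2n$ is negative for $n\leq 5$, so the statement has content only for $\edim R\geq 6$; this is harmless, being compatible with the requirement $\edim S=n+1\geq 3$ built into \Cref{gutteslemma}, which underlies \Cref{mainthm}.
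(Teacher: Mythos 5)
Your proposal is correct and matches the paper's proof, which simply sets $s=1$ in \Cref{mainthm}; the arithmetic check $\frac{n^2-5n}{2}=\binom{n}{2}-2n$ is exactly the required verification. The additional remarks about $\edim S=n+1$ and the case $n\leq 5$ are accurate but not needed.
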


\begin{proof} Set $s = 1$ in the above theorem.
\end{proof}

\begin{corollary}\label{m^6 in C}
	Let $R$ be Gorenstein and $n=\edim R\geq 6$. If $\m_R^6\subseteq x_1R$, then $\tau(\Omega_R)\neq 0$.
\end{corollary}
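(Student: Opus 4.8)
The plan is to reduce to Theorem \ref{mainthm} (or equivalently Theorem \ref{m^4 in C}) by exploiting the Gorenstein hypothesis, which forces the reduced type $s$ to equal $1$. First I would record that since $R$ is Gorenstein, $\type(R) = 1$ and, by the discussion following \Cref{canonicalmoduledescription}, $s = 1$; hence $S = R[\tfrac{\C_R}{x_1}] = R[T_1]$ has embedding dimension $n+1$. The hypothesis $\m_R^6 \subseteq x_1 R$ is stronger than what we need: since $x_1 S = (x_1, \C_R)R \supseteq x_1 R$, we certainly have $\m_R^6 \subseteq x_1 S$, but more is true. The key point is that passing from $R$ to $S$ should drop the power of the maximal ideal needed to land inside $x_1 S$: concretely, I would argue that $\m_S^5 \subseteq x_1 S$. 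Indeed, by \Cref{powerofm} this is equivalent to $\m_R^5 \subseteq (x_1, \C_R)R$, and this should follow from $\m_R^6 \subseteq x_1 R$ together with $\C_R \subseteq \m_R^2$: a length/valuation count using $\m_R \C_R = x_1 \C_R$ (as in the proof of \Cref{powerofm}) shows that the ``extra'' generator $T_1 = t^{b_1}$ with $b_1 \geq c_R - a_1$ lowers the relevant exponent, so that $\m_S^5 \subseteq \m_R^5 + \C_R \subseteq (x_1,\C_R)R = x_1 S$.

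Granting $\m_S^5 \subseteq x_1 S$, I would now apply \Cref{gutteslemma}(b) with the ring $S$ (which has embedding dimension $n + 1 \geq 7$) and $y = x_1$, noting that $x_1$ is a nonzerodivisor and $\edim(S/x_1 S) = \edim S - 1 = n$ since $x_1$ is a minimal generator of $\m_S$. This gives
\[
\ell(0 :_{\Omega_S} x_1) \;\geq\; \frac{(n-1)\,n}{2} - \type(S).
\]
By \Cref{typecomparisonlemma} with $s = 1$, $\type(S) \leq \type(R) + (n-1) = n$. Therefore
\[
\ell(\tau(\Omega_S)) \;\geq\; \ell(0 :_{\Omega_S} x_1) \;\geq\; \frac{n(n-1)}{2} - n \;=\; \frac{n^2 - 3n}{2}.
\]
With $s = 1$, the threshold in \Cref{thm on one more torsion} is $ns + \binom{s}{2} + 1 = n + 1$, and the inequality $\tfrac{n^2-3n}{2} \geq n+1$ holds precisely when $n^2 - 5n - 2 \geq 0$, i.e. for $n \geq 6$. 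All the torsion elements in $0 :_{\Omega_S} x_1$ have non-units in the last row (the row corresponding to $dT_1$) by \Cref{last row unit}, so \Cref{thm on one more torsion} applies and yields a nonzero torsion element of $\Omega_R$.

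The main obstacle I anticipate is the descent of the exponent, i.e. establishing $\m_S^5 \subseteq x_1 S$ from $\m_R^6 \subseteq x_1 R$ — this is exactly the content that makes the ``Gorenstein $\m^6$'' case collapse onto the ``$\m^5$'' machinery, and it is the place where the interplay between $\C_R \subseteq \m_R^2$, the identity $\m_R \C_R = x_1 \C_R$, and the valuations $b_j$ of the new variables must be handled carefully; once that is in hand, the rest is bookkeeping with \Cref{gutteslemma}, \Cref{typecomparisonlemma}, and \Cref{thm on one more torsion}. (Alternatively, if one only obtains $\m_S^4 \subseteq x_1 S$ one can instead invoke \Cref{m^4 in C} directly, whose hypothesis $n(n-3) \geq 2s = 2$ is automatic for $n \geq 6$; I would check which of the two exponent bounds actually holds and cite the corresponding theorem.)
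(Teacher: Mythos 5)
Your overall strategy is the paper's: reduce to the ``$\m_S^5\subseteq x_1S$'' situation, apply \Cref{gutteslemma}(b) to $S$ with $y=x_1$, bound $\type(S)$ via \Cref{typecomparisonlemma}, check the threshold $ns+\binom{s}{2}+1=n+1$ against $\frac{n(n-1)}{2}-n$ for $n\geq 6$, and conclude with \Cref{last row unit} and \Cref{thm on one more torsion} (this is exactly the content of \Cref{mainthm}, which you re-derive with $s=1$ instead of citing). But there is a genuine gap at the one step you yourself flag as the obstacle: the passage from $\m_R^6\subseteq x_1R$ to $\m_R^5\subseteq (\C_R,x_1)R$ (equivalently, via \Cref{powerofm}, to $\m_S^5\subseteq x_1S$). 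Your justification --- ``a length/valuation count using $\m_R\C_R=x_1\C_R$ shows the extra generator lowers the exponent, so $\m_S^5\subseteq\m_R^5+\C_R\subseteq(x_1,\C_R)R$'' --- is circular: it presupposes $\m_R^5\subseteq(x_1,\C_R)R$, which is precisely what must be proved, and it nowhere uses the Gorenstein hypothesis beyond $s=1$. The implication is false in that generality: $\m_R^6\subseteq x_1R$ only gives $\m_R^5\subseteq x_1R:_R\m_R$, and $(\C_R,x_1)R$ can be a proper subideal of $x_1R:_R\m_R$ when the reduced type is smaller than the type (the paper's closing remark makes exactly this point). The missing argument is the socle identification: since $x_1\C_R=\m_R\C_R$ one always has $(\C_R,x_1)R\subseteq x_1R:_R\m_R$; since $R$ is Gorenstein the socle $(x_1R:_R\m_R)/(x_1)$ is one-dimensional, and since the conductor never lies in a proper principal ideal (\cite[Corollary 2.6]{maitra2020partial}) the submodule $(\C_R,x_1)/(x_1)$ is nonzero, forcing $(\C_R,x_1)R=x_1R:_R\m_R$ and hence $\m_R^5\subseteq(\C_R,x_1)R$. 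This is where Gorenstein-ness does its real work, and without it your exponent drop does not go through.

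Two smaller points: you should first dispose of the case $\C_R\not\subseteq\m_R^2$ via \Cref{valuation more than conductor}, since the construction of $S$ and \Cref{powerofm} assume $\C_R\subseteq\m_R^2$; and your fallback remark is backwards --- $\m_S^4\subseteq x_1S$ is a \emph{stronger} condition than $\m_S^5\subseteq x_1S$, so if the latter is out of reach the former cannot serve as an alternative route to \Cref{m^4 in C}.
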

\begin{proof} We may assume that $\mathfrak{C}_R\subseteq \m_R^2$, because we have already shown that the torsion is nonzero in case the inclusion does not hold (\Cref{valuation more than conductor}).
	Since $R$ is Gorenstein, $R$ is also of reduced  type one (\Cref{canonicalmoduledescription}).  The condition that $\m_R^6\subseteq x_1R$ implies that
	$\m_R^5\subseteq (x_1R:\m_R)$.  However, it is always true that $\C_R + x_1R\subseteq x_1:\m_R$, since $x_1\C_R = \m_R\C_R$.  As the conductor can never lie inside a proper principal ideal (follows, for instance, from \cite[Corollary 2.6]{maitra2020partial}) and since $R$ is Gorenstein, we obtain that $\C_R + x_1R = x_1:\m_R$.  Therefore, $\m_R^5\subseteq \mathfrak{C}_R+ x_1R$. The inequality $1=\text{type}(R)\leq {n\choose 2}-2n$ is also satisfied as $n\geq 6$. Thus all the conditions of the previous theorem hold, and the result now follows.
\end{proof}

\begin{remark} As mentioned in the above proof, it is always true that $\C_R + x_1R\subseteq x_1:\m_R$, since $x_1\C_R = \m_R\C_R$.  If equality holds, then $s$ is equal to the type of $R$, and the condition
that $\m_R^5\subseteq \mathfrak{C}_R + x_1R$ is equivalent to the condition that $\m_R^6\subseteq x_1R$.  This case of ``maximal" reduced type gives a further extension of
the work of G\"uttes.  This maximality occurs if $R$ is Gorenstein, but it can also occur in other cases.  For one such example, using Macaulay2, we check that
$R = \k[[t^{10},t^{11}+t^{16},t^{12}+t^{16},t^{13}+t^{16}]]=k[[x,y,z,w$]] has conductor  $\C_R = t^{20}\overline{R}$ and $(x):\m_R=(x,w^2,zw,yw,z^2,yz,y^2)= (x,\C_R)$.  
\end{remark}

\bibliographystyle{siam}
\bibliography{references}
\end{document}